\def\th@definition{%
  \normalfont 
}
\def\th@plain{%
  \slshape 
}
\def\th@remark{%
  \normalfont 
  \thm@preskip\topsep
  \divide\thm@preskip\tw@
  \thm@postskip\thm@preskip
}
\numberwithin{equation}{section}
\theoremstyle{plain}
\newtheorem{theorem}{Theorem}
\newtheorem{proposition}[theorem]{Proposition}
\theoremstyle{remark}
\newtheorem{remark}[theorem]{Remark}
\theoremstyle{definition}
\newtheorem{definition}[theorem]{Definition}
\numberwithin{theorem}{section}
\def\Om{\Omega}
\def\R{\mathbb R}
\def\D{\nabla }
\def\vp{\varepsilon}
\def\W{W^{1,p}(\Om)}
\newcommand{\feps}{f_\varepsilon}
\newcommand{\ueps}{u_\varepsilon}
\newcommand{\Phieps}{\Phi_\varepsilon}
\newcommand{\disp}{\displaystyle}
\newcommand{\diw}{{\operatorname{div}}}
\newcommand{\meas}{\operatorname{meas}}
\newcommand{\aop}{\operatorname{\mathbf{a}}}
\newcommand{\med}{{\operatorname{med}}}
\newcommand{\sign}{{\operatorname{sign}}}
\def\m2{|\Omega | /2}
\begin{document}
\date{}
\title[]{Neumann problems for nonlinear elliptic equations with $L^1$ data}

\begin{abstract} In the present paper we prove existence  results for solutions to nonlinear elliptic Neumann problems whose prototype is
\begin{equation*} 
    \begin{cases}
      -\Delta_{p} u -\diw (c(x)|u|^{p-2}u)) =f   & \text{in}\ \Omega, \\ 
      \left( |\nabla u|^{p-2}\nabla u+ c(x)|u|^{p-2}u \right)\cdot\underline n=0 & \text{on}\ \partial \Omega \,,%
    \end{cases}%
\end{equation*}%
when $f$ is just a summable function. Our approach allows also to deduce a stability result for renormalized solutions and an existence result for operator with a zero order term.

\noindent{\sc Mathematics Subject Classification:MSC 2000 : }  35J25

\noindent{\sc Key words: }  Nonlinear elliptic equations, Neumann problems, renormalized solutions, existence results

\end{abstract}

\author{M.F. Betta}
\address{Maria Francesca Betta \hfill \break\indent Dipartimento per le
Tecnologie, \hfill\break\indent Universit\`a degli Studi di Napoli
Parthenope,\hfill\break\indent Centro Direzionale, Isola C4 80143 Napoli,
Italy}
\email{francesca.betta@uniparthenope.it}
\author{O. Guib\'e}
\address{Olivier Guib\'e \hfill \break\indent 
  Laboratoire de Math\'ematiques Rapha\"el Salem, \hfill\break\indent
UMR 6085 CNRS-Universit\'e de Rouen\hfill\break\indent
Avenue de l'Universit\'e, BP.12\hfill\break\indent
76801 Saint-\'Etienne-du-Rouvray, France
}
\email{olivier.guibe@univ-rouen.fr}
\author{A. Mercaldo}
\address{Anna Mercaldo \hfill \break\indent Dipartimento di Matematica e
Applicazioni ``R. Caccioppoli", \hfill\break\indent Universit\`a di Napoli
``Federico II",\hfill\break\indent Complesso Monte S. Angelo, Via Cintia,
80126 Napoli, Italy}
\email{mercaldo@unina.it}
\maketitle

\section{Introduction}

In the present paper we prove existence results for solutions to nonlinear elliptic Neumann problems 
whose prototype is
\begin{equation} \label{pb0}
    \begin{cases}
        -\Delta_{p} u -\diw (c(x)|u|^{p-2}u) =f   & \text{in}\ \Omega, \\[.1cm] 
      \left( |\nabla u|^{p-2}\nabla u+ c(x)|u|^{p-2}u \right)\cdot\underline n=0 & \text{on}\ \partial \Omega ,%
  \end{cases}%
\end{equation}%
where $\Omega$ is a  bounded domain of $\R^{N}$,
$N\geq 2$, with Lipschitz boundary, $1< p\le N$ ,
$\underline n$ is the outer 
unit normal to $\partial \Omega$, the datum $f$ belongs to $L^{1}(\Omega)$ and satisfies
the compatibility condition $\int_\Omega f =0$. Finally  the coefficient $c(x)$ 
belongs to an appropriate Lebesgue space.

When 
$c(x)=0$ and $f$ is an element of the dual space of the Sobolev space
$W^{1,p}(\Omega)$, the existence and uniqueness (up to additive
constants) of weak solutions to problem \eqref{pb0} is consequence of
the classical theory of pseudo monotone operators (cfr. 
\cite{LL}, \cite{Lions}). But if $f$ is just an $L^1-$function, and not more an element of the dual space of $W^{1,p}(\Omega)$, one has to give a meaning to the notion of solution.

\noindent When Dirichlet boundary conditions are prescribed, various
definitions of solution to nonlinear elliptic equations with
right-hand side in $L^1$ or measure have been introduced. In
\cite{BBGGPV}, \cite{Aglio}, \cite{LM}, \cite{murat94}  different notions of 
solution are defined even if they turn out to be equivalent, at least when the datum is an 
$L^{1}-$ function. The
study of existence or uniqueness for Dirichlet boundary value problems
has been the object of several papers. We just recall that the linear
case has been studied in \cite{St}, while the nonlinear case began to be
faced  in \cite{BG1} and \cite{BG2} and was  continued in various
contributions, including \cite{AM1}, \cite{BeGu},  \cite{BBGGPV},  \cite{BMMP1},  \cite{BMMP},  \cite{DMOP},  \cite{Aglio},   \cite{GM2},  \cite{GM1};
mixed boundary value problems have been also studied  (see \cite{BeGu}). 
In the present paper we refer to the so-called renormalized solutions
(see \cite{DMOP},  \cite{LM},  \cite{murat94}) whose
precise definition is recalled in Section 2.

The existence for Neumann boundary value problems  with $L^1-$ data
when $c=0$ has been  treated in various contests. In
\cite{AMST97}, \cite{Ciabr}, 
\cite{Droniou00}, \cite{DV} and \cite{Prignet97} the
existence of a distributional  solution which belongs to a suitable
Sobolev space  and which has null mean value is proved. Nevertheless
when $p$ is close to 1, i.e. $p\le 2-1/N$, the distributional solution
to problem \eqref{pb0} does not belong to a Sobolev space and in
general is not a summable function; this implies that its mean value
has not meaning. This difficulty is overcome in \cite{Rako} by
considering solutions $u$ which are not in $L^1(\Omega)$, but for
which $\Phi(u)$ is  in $L^1(\Omega)$, where
$\Phi(t)=\int_{0}^{t}\frac{ds}{(1+|s|)^\alpha}$ with appropriate $\alpha>1$. In \cite{ACMM} the case where 
both the datum $f$ and the domain $\Omega$ are not regular is studied and  solutions whose  median is equal to zero are 
obtained with a natural process of approximations. We recall that the median of $u$ is defined by 
\begin{equation}\label{-520bis}
    {\rm med } (u) = \sup \{t\in \R : \meas\{u>t\} \geq \meas(\Omega)/2\}\, .
\end{equation}
Neumann problems have been studied by a different point of view in \cite{FM2, FM}.

In this paper we face two difficulties: one  due to the presence
of the lower order term $-\diw (c(x)|u|^{p-2}u))$ 
and the other due to the  low integrability properties of the datum $f$.

Our main result is Theorem \ref{exist_renorm} which asserts the existence of a renormalized solution to
\eqref{pb0} having $\med(u)=0$.  Its proof, contained in Section 4, is based on an usual procedure of
approximation which consists by considering problems of type
\eqref{pb0} having smooth data which strongly converge  to $f$ in
$L^1$. For such a sequence of problems we prove in Section 3 an existence results for weak solutions which is obtained by using a fixed point arguments.
A priori estimates allow to prove that these weak solutions converge in some
sense to a function $u$ and a delicate procedure of passage to the limit allows to prove that $u$ is a  renormalized solution to  \eqref{pb}.

 In Section 5 we give a stability
result and we prove that, under larger assumptions on the summability
of $f$, a renormalized solution to \eqref{pb} is in turn a weak solution to the same problem. At last Section 6
is concerned with Neumann problems with a zero order term; adapting
the proof of Theorem \ref{exist_renorm} allows to derive an existence result for this type of operators.


\section{Assumptions and definitions}

Let us consider the following nonlinear elliptic  Neumann problem
\begin{equation} \label{pb}
\left\{
\begin{array}{lll}
-\mbox{div}\left( \aop\left( x, u, \nabla u\right)+ \Phi (x,u) \right) =f &  &
\text{in}\ \Omega, \\
 \left( \aop\left( x, u, \nabla u\right)+ \Phi (x,u) \right)\cdot\underline n=0& & \text{on}\ \partial \Omega ,%
\end{array}%
\right. 
\end{equation}%
where $\Omega $ is a connected open subset of $\mathbb{R}^{N}$, $N\ge
2$, having finite Lebesgue measure and Lipschitz boundary, $\underline
n$ is the outer unit normal to $\partial \Omega$.  We assume that $p$
is a real number such that  $1<p \leq N$ and 
\[
\aop
:\Omega \times\mathbb{R}\times \mathbb{R}^{N}\rightarrow
\mathbb{R}^{N}\, ,
\]
\[
\Phi:\Omega \times \mathbb{R}\rightarrow \mathbb{R}^N
\] are Carath\'{e}odory functions. Moreover $\aop$ satisfies:
\begin{equation}
\aop\left( x, s, \xi \right) \cdot \xi \geq \alpha \left\vert \xi
\right\vert ^{p},\quad \forall s\in\R,\ \forall \xi\in\R^{N},\
\text{a.e. in $\Omega$}\label{ell}
\end{equation}
where $\alpha>0$ is a given real number;
\begin{equation}
\left( \aop\left( x, s, \xi \right) -\aop\left( x,s, \eta \right)
\right) \cdot \left( \xi -\eta \right) \geq 0
\label{mon}
\end{equation}
$\forall s\in\R$, $\forall \xi,\eta\in\R^{N}$ with $\xi\neq \eta$ and
a.e. in $\Omega$;
\par
\noindent for any $k>0$ there exist  $a_{k}>0$ and  $b_k $ belonging
to  $L^{p'}(\Omega)$ such that
\begin{equation}
  \label{growth}
\left\vert \aop\left( x,s, \xi \right) \right\vert \leq a_{k}\left\vert
\xi \right\vert ^{p-1}+b_k(x),\quad \forall |s|<k,\ \forall
\xi\in\R^{N},\ \text{a.e. in $\Omega$.}
\end{equation}
\par
We assume that $\Phi$ satisfies the following growth condition
\begin{equation}
| \Phi( x,s)|\le c(x) ( 1+|s|^{p-1})   \label{growthphi}
\end{equation} 
$\forall s \in \mathbb{R}$, a.e. in $\Omega$,
 with $c\in L^{\frac{N}{p-1}}(\Om)$ if $p<N$ and  $c\in L^{q}(\Omega)$ with
 $q>N/(N-1)$ if $p=N$.
\par
Finally we assume that the datum $f$ is a measurable function in a Lebesgue space $L^r(\Omega)$, $1\le r\le +\infty$, which belongs to the dual space of the classical Sobolev space $W^{1,p}(\Om)$ or is just an $L^1-$ function. Moreover it satisfies  the compatibility condition
\begin{equation}
\int_{\Om}f\,dx=0.\label{comp}
\end{equation}

\par
As explained in the Introduction we deal with solutions whose median
is equal to zero.
Let us recall that if  $u$ is a measurable function, we denote  the
median of $u$ by
\begin{equation}
    \med (u) = \sup \left\{ t\in \R:\meas \{x\in \Om : u(x)>t\}
        >\frac{\meas(\Om)}{2} \right\}.
\end{equation}
Let us explicitely observe that if $\med(u)=0$ then
\begin{gather*}
    \meas \{x\in \Om : u(x)>0\}   \le\frac{\meas(\Om)}{2},
    \\
    \meas \{x\in \Om : u(x)<0\}  \le\frac{\meas(\Om)}{2} \,.
\end{gather*}
In this case a Poincar\'e-Wirtinger inequality holds (see e.g. \cite{Z}):
\begin{proposition}
If $u\in W^{1,p}(\Om) $, then
\begin{equation}
\|u-\med (u)\|_{L^p(\Omega)}\le C\|\D u \|_{(L^p(\Omega))^N} \label{poincare}
\end{equation}
where $C$ is a constant depending on $p$, $N$, $\Om$.
\end{proposition}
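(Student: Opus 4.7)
My plan is to prove the inequality by a compactness-contradiction argument analogous to the classical proof of the Poincar\'e--Wirtinger inequality for the mean. Since $\nabla(u-\med(u))=\nabla u$, it suffices to establish the equivalent claim that there is a constant $C=C(p,N,\Omega)$ such that every $v\in W^{1,p}(\Omega)$ with $\med(v)=0$ satisfies $\|v\|_{L^p(\Omega)}\le C\|\nabla v\|_{(L^p(\Omega))^N}$. Suppose this fails; then one finds a sequence $\{v_n\}\subset W^{1,p}(\Omega)$ with $\med(v_n)=0$, $\|v_n\|_{L^p(\Omega)}=1$, and $\|\nabla v_n\|_{(L^p(\Omega))^N}\to 0$.

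The sequence $\{v_n\}$ is bounded in $W^{1,p}(\Omega)$, so by the Rellich--Kondrachov compactness theorem (available since $\partial\Omega$ is Lipschitz) a subsequence converges strongly in $L^p(\Omega)$ and almost everywhere to some $v\in L^p(\Omega)$. The strong $L^p$-convergence $\nabla v_n\to 0$ forces $\nabla v=0$ in the distributional sense, and since $\Omega$ is connected, $v$ equals some constant $c$ a.e. on $\Omega$; the normalization $\|v\|_{L^p(\Omega)}=1$ then forces $c\ne 0$.

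It remains to derive a contradiction from the median condition. As observed just before the statement, $\med(v_n)=0$ implies both $\meas\{v_n>0\}\le |\Omega|/2$ and $\meas\{v_n<0\}\le |\Omega|/2$. Assume $c>0$: by Egorov's theorem applied to the a.e.-convergent subsequence, for any $\eta>0$ there exists $E_\eta\subset\Omega$ with $\meas(\Omega\setminus E_\eta)<\eta$ on which $v_n\to c$ uniformly, so for $n$ large one has $v_n>c/2>0$ throughout $E_\eta$. Choosing $\eta<|\Omega|/2$ yields $\meas\{v_n>0\}>|\Omega|/2$, contradicting the median bound; the case $c<0$ is ruled out symmetrically, so $c=0$, contradicting $\|v\|_{L^p(\Omega)}=1$. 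The main obstacle I anticipate is precisely this last step: the median is not continuous with respect to $L^p$ convergence, so one cannot simply pass $\med(v_n)=0$ to the limit. It is the one-sided level-set inequalities above, which do survive the limit via Egorov's theorem, that carry the argument and exploit connectedness of $\Omega$ to rule out any nonzero constant limit.
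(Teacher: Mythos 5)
Your proof is correct. Note that the paper does not actually prove this proposition: it simply cites the literature (``see e.g.\ \cite{Z}''), so there is no in-paper argument to compare against; your compactness--contradiction proof is the standard self-contained route and it goes through. The reduction to $\med(v)=0$ is legitimate because the median is translation-equivariant (and positively homogeneous, which you implicitly use when normalizing $\|v_n\|_{L^p}=1$), and the Rellich--Kondrachov step is available since $\Omega$ is a bounded connected Lipschitz domain. You correctly identify the only delicate point: $\med$ is not continuous under $L^p$ convergence, so one cannot pass $\med(v_n)=0$ to the limit directly; instead you use exactly the two one-sided inequalities $\meas\{v_n>0\}\le \meas(\Omega)/2$ and $\meas\{v_n<0\}\le \meas(\Omega)/2$ that the paper records right before the proposition, and these do survive the limit via Egorov (indeed, uniform convergence to a nonzero constant $c$ on a set of measure exceeding $\meas(\Omega)/2$ contradicts the appropriate one of the two bounds). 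Connectedness of $\Omega$ is used, as it must be, to conclude that the limit is a single constant. The argument is complete.
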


As pointed out in the Introduction, when the datum $f$ is not an element of the dual space of the classical Sobolev
space $W^{1,p}(\Om)$ or is just an $L^1$-function, the classical notion of weak
solution does not fit. We will refer to the notion of  renormalized solution to \eqref{pb} (see
\cite{DMOP,murat94} for elliptic equations with Dirichlet boundary conditions) which we give below.

In the whole paper, $T_{k}$, $k\ge 0$, denotes the truncation at height $
k$ that is $T_{k}(s)=\min(k,\max(s,-k))$, $\forall s\in\R$. 
\begin{definition}\label{defrenorm}
  A real function $u$ defined in $\Omega$ is a renormalized solution
  to \eqref{pb} if 
  \begin{gather}
    \label{def1}
    \text{ $u$ is measurable and finite almost everywhere in
      $\Omega$,}
    \\
    \label{def2}
    T_{k}(u)\in W^{1,p}(\Omega), \text{ for any $k>0$,}
    \\
    \label{def3}
    \lim_{n\rightarrow +\infty }\frac{1}{n} \int_{\{ x\in\Omega;\, |u(x)|<n\}}
    \aop(x,u,\nabla u) \nabla u \,dx = 0
  \end{gather}
  and if for every function $h$ belonging to $W^{1,\infty}(\R)$ with
  compact support  and for every $\varphi\in L^{\infty}(\Omega)\cap
  W^{1,p}(\Omega)$
  we have
  \begin{multline}
      \int_{\Om} h(u) \aop (x,u, \D u) \D\varphi
  dx + \int_{\Om} h'(u) \aop (x,u, \D u) \D u   \varphi
  dx  \label{def4}\\ 
 +   \int_{\Om} h(u)\Phi (x, u)  \D\varphi dx + \int_{\Om} h'(u)\Phi (x, u)  \D u \varphi dx =
 \int_\Om f\varphi h(u) dx.
  \end{multline}
\end{definition}

\begin{remark} 
   A renormalized solution  is not  an
  $L^1_{loc}(\Omega)$-function and therefore it has not a
  distributional gradient. Condition \eqref{def2} allows to define a
  generalized gradient of $u$ according to Lemma 2.1 of \cite{BBGGPV},
  which asserts the existence of a unique measurable function $v$
  defined in $\Omega$ such that $\nabla T_k(u)=\chi_{\{|u|<k  \}}v$
  a.e. in $\Omega$, $\forall k>0$. This function $v$ is the
  generalized gradient of $u$ and it is denoted by $\nabla u$. 

Equality \eqref{def4} is formally obtained by using in \eqref{pb} the
test function $\varphi h(u)$ and  by taking into account Neumann boundary
conditions. Actually in a standard way one can  check that every term
in \eqref{def4} is well-defined under the structural assumptions on
the elliptic operator. 
\end{remark}

\begin{remark}
  It is worth noting that growth assumption \eqref{growthphi} on
  $\Phi$ together with \eqref{def1}--\eqref{def3} allow to prove that
  any renormalized solution $u$ verifies 
  \begin{equation}
    \label{ermk1}
    \lim_{n\rightarrow +\infty} \frac{1}{n} \int_{\Omega}
    |\Phi(x,u)|\times |\nabla T_{n}(u)| dx = 0.
  \end{equation}
  Without loss of generality we can assume that $\med(u)=0$. 
   Growth assumption \eqref{growthphi}
implies that
\begin{equation*}
  \int_{\Om} |\Phi (x, u)|  \times |\D T_{n}(u)|  dx 
  \leq
  \frac{1}{n} \int_{\Omega} c(x) (1+|T_{n}(u)|)^{p-1} |\nabla
  T_{n}(u)| dx.
\end{equation*}
In the case $N>p$, using H\"older inequality we obtain
\begin{equation}\label{star}
  \begin{split}
    \int_{\Omega} c(x) & (1+|T_{n}(u)|)^{p-1}  |\nabla
    T_{n}(u)| dx \\
    & \leq C \| c\|_{L^{N/(p-1)}(\Omega)} (1+ \|
    T_{n}(u)\|^{p-1}_{L^{p^{*}}(\Omega)}) \| \nabla
    T_{n}(u)\|_{(L^{p}(\Omega))^{N}}.
  \end{split}
\end{equation}
Since ${\med(T_{n}(u))}=0$, by Poincar\'e--Wirtinger inequality, i.e. Proposition 2.1,
and Sobolev embedding theorem it follows that
\begin{equation*}
  \begin{split}
    \int_{\Omega} c(x) & (1+|T_{n}(u)|)^{p-1}  |\nabla
    T_{n}(u)| dx \\
    & \leq C \| c\|_{L^{N/(p-1)}(\Omega)} (1+
    \| \nabla
    T_{n}(u)\|^{p-1}_{(L^{p}(\Omega))^{N}}) \| \nabla
    T_{n}(u)\|_{(L^{p}(\Omega))^{N}}
  \end{split}
\end{equation*}
where $C>0$ is a generic constant independent of $n$. Therefore  Young inequality
leads to 
\begin{equation}\label{ermk2}
  \begin{split}
     \frac{1}{n} \int_{\Omega} c(x) &  (1+|T_{n}(u)|)^{p-1}  |\nabla
    T_{n}(u)| dx \\
    & \leq \frac{C}{n} \| c\|_{L^{N/(p-1)}(\Omega)} (1+
    \| \nabla
    T_{n}(u)\|^{p}_{(L^{p}(\Omega))^{N}} )
    \end{split}
\end{equation}
In the case
$N=p$ a similar inequality involving $ \| c\|_{L^{q}(\Omega)}$ with
$q>N/(N-1)$ occurs. 

Due to the
coercivity of the operator $\aop$ and to \eqref{def3} we have
\begin{equation*}
  \lim_{n\rightarrow+\infty}\frac{1}{n} \int_{\Omega} |\nabla
  T_{n}(u)|^{p} dx =0.
\end{equation*}
By \eqref{star} and \eqref{ermk2} we  conclude that \eqref{ermk1} holds.
\end{remark}

\section{A basic existence result for weak solutions}

In this section we assume more restrictive conditions on the right-hand side $f$, on $\Phi$ and 
on the operator $\aop$ in order to prove the existence of a weak
solution $u$ to problem \eqref{pb}, that is
  \begin{gather*}
    u \in W^{1,p}(\Omega), \\
    \int_{\Omega} \aop(x,u,\nabla u)\nabla v dx +\int_{\Omega}
    \Phi(x,u) \nabla v dx =\int_{\Omega} fv  dx
  \end{gather*}
  for any $v\in W^{1,p}(\Omega)$.

We assume
\begin{gather}
  \label{3eq1}
  f\in L^{r}(\Omega)\cap (W^{1,p}(\Omega))'  \\
  \label{3eq2}
  |\Phi(x,s)| \leq c(x) \quad \text{$\forall s\in\R$, a.e. in
    $\Omega$} 
\end{gather}
with $c\in L^{\infty}(\Omega)$. Moreover the operator $\aop$ satisfies
\begin{equation}
	\label{3eq3a}
\left( \aop\left( x, s, \xi \right) -\aop\left( x,s, \eta \right)
\right) \cdot \left( \xi -\eta \right) >0
\end{equation}
$\forall s\in\R$, $\forall \xi,\eta\in\R^{N}$ with $\xi\neq \eta$ and
a.e. in $\Omega$;
\begin{equation}
  \label{3eq3}
  |\aop(x,s,\xi)|\leq a_{0}(|\xi|^{p-1}+|s|^{p-1}) +a_{1}(x) \quad
  \forall s\in\R,\, \forall \xi\in \R^{N},\ \text{a.e. in $\Omega$},
\end{equation}
with $a_{0}>0$, $a_{1}\in L^{p'}(\Omega)$.


\begin{theorem} \label{exist_weak} Assume that
  \eqref{ell}, 
 \eqref{3eq1}--\eqref{3eq3} and   \eqref{comp} hold.
  There exists at least one weak solution $u$ to
  problem \eqref{pb} having $\med(u)=0$.
\end{theorem}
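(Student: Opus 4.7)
The plan is to apply Schauder's fixed point theorem to a freezing-and-solving map on $L^p(\Om)$. For each $w\in L^p(\Om)$, consider the auxiliary problem: find $u\in W^{1,p}(\Om)$ with $\med(u)=0$ such that
\begin{equation*}
\int_\Om \aop(x,w,\D u)\cdot \D v\,dx = \int_\Om f v\,dx - \int_\Om \Phi(x,w)\cdot \D v\,dx,\quad \forall v\in W^{1,p}(\Om).
\end{equation*}
Under \eqref{ell}, \eqref{3eq3a} and \eqref{3eq3}, $\xi\mapsto \aop(x,w,\xi)$ defines a strictly monotone, coercive Leray--Lions operator on $W^{1,p}(\Om)$. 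By \eqref{3eq2} and the compatibility condition \eqref{comp}, the right-hand side lies in $(W^{1,p}(\Om))'$ and vanishes on constants, so classical monotone operator theory produces a unique solution $u$; setting $T(w):=u$ yields a well-defined map.

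Testing with $v=u$ and using \eqref{ell}, Proposition 2.1 (applicable since $\med(u)=0$) and \eqref{3eq2} produces
\begin{equation*}
\alpha \|\D u\|_{L^p(\Om)}^p \le C\bigl(\|f\|_{(W^{1,p})'}+\|c\|_{L^\infty}|\Om|^{1/p'}\bigr)\|\D u\|_{L^p(\Om)},
\end{equation*}
with a constant independent of $w$. Hence $\|u\|_{W^{1,p}(\Om)}\le R_0$, so $T$ maps a sufficiently large ball $B_R\subset L^p(\Om)$ into itself and $T(B_R)$ is bounded in $W^{1,p}(\Om)$, hence relatively compact in $L^p(\Om)$ by Rellich--Kondrachov.

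The crux is to prove continuity of $T$ on $B_R$. Take $w_n\to w$ in $L^p(\Om)$ and set $u_n=T(w_n)$. Up to extraction, $u_n\rightharpoonup u^*$ weakly in $W^{1,p}(\Om)$, $u_n\to u^*$ in $L^p(\Om)$ and a.e., $w_n\to w$ a.e., and by \eqref{3eq2} with dominated convergence $\Phi(x,w_n)\to\Phi(x,w)$ in $L^{p'}(\Om)$. Using the equation for $u_n$ to substitute for $\int \aop(x,w_n,\D u_n)\cdot\D u_n\,dx$ and passing to the limit in the bilinear pieces, one obtains
\begin{equation*}
\int_\Om\bigl(\aop(x,w_n,\D u_n)-\aop(x,w_n,\D u^*)\bigr)\cdot(\D u_n-\D u^*)\,dx\longrightarrow 0.
\end{equation*}
Since the integrand is nonnegative by \eqref{mon}, it converges to $0$ in $L^1(\Om)$ and, up to a further extraction, almost everywhere. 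Strict monotonicity \eqref{3eq3a}, combined with the a.e.\ convergence of $w_n$ and the continuity of $\aop$ in its middle argument, then forces $\D u_n\to \D u^*$ a.e. in $\Om$; Vitali's theorem upgrades this to $\aop(x,w_n,\D u_n)\to\aop(x,w,\D u^*)$ in $L^{p'}(\Om)$, so the limit in the frozen equation identifies $u^*$ with $T(w)$. Uniqueness of the fixed-point equation gives convergence of the whole sequence, and Schauder's theorem applied to $T:B_R\to B_R$ yields a fixed point, i.e.\ a weak solution of \eqref{pb} with $\med(u)=0$.

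The main obstacle is this last step: turning the Minty-type inequality into pointwise convergence of the gradients. This is where the strict monotonicity assumption \eqref{3eq3a} (stronger than \eqref{mon}, which is all that is available in the renormalized setting of Section 4) is essential, and the simultaneous dependence of $\aop(\cdot,w_n,\cdot)$ on the varying frozen argument $w_n$ must be handled with care through the Carathéodory property and the growth bound \eqref{3eq3}.
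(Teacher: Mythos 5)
Your proposal follows the same skeleton as the paper's proof: freeze the lower-order arguments, solve the resulting strictly monotone problem uniquely in the median-zero class (using \eqref{comp} so that the right-hand side vanishes on constants), derive a $w$-independent $W^{1,p}$ bound via Poincar\'e--Wirtinger, and conclude with a fixed point theorem after establishing continuity and compactness of the solution map. The one genuine divergence is how the limit flux is identified in the continuity step. The paper keeps only the weak $(L^{p'}(\Om))^N$ limit $\sigma$ of $\aop(x,v_n,\nabla u_n)$ and identifies $\sigma=\aop(x,v,\nabla u)$ by the classical Minty device, testing against $\nabla u+t\phi$ and letting $t\to 0$; this never produces, nor needs, any pointwise information on $\nabla u_n$. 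You instead run the Leray--Lions/Boccardo--Murat argument: the nonnegative integrand $\left(\aop(x,w_n,\nabla u_n)-\aop(x,w_n,\nabla u^*)\right)\cdot(\nabla u_n-\nabla u^*)$ tends to $0$ in $L^1(\Om)$, hence a.e.\ along a subsequence, and strict monotonicity \eqref{3eq3a} together with the Carath\'eodory property and coercivity forces $\nabla u_n\to\nabla u^*$ a.e. Both routes are correct here; yours yields stronger information (a.e.\ convergence of the gradients) at the cost of a more delicate pointwise argument. One small caveat: the final ``Vitali upgrade'' to strong $(L^{p'}(\Om))^N$ convergence of $\aop(x,w_n,\nabla u_n)$ requires equi-integrability of $|\nabla u_n|^p$, which is not immediate from the energy bound --- either extract it from the $L^1$ convergence of the monotonicity integrand, or simply note that a.e.\ convergence plus the $L^{p'}$ bound already gives weak $(L^{p'}(\Om))^N$ convergence, which is all that is needed to pass to the limit in the frozen equation.
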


\begin{proof} The proof relies on a fixed point argument.
  \par
  Let $v\in L^p(\Om)$. Due to \eqref{ell}, 
  \eqref{3eq3a} and
  \eqref{3eq3}, $(x,\xi)\in \Omega\times \R^{N}\mapsto 
  \aop(x,v(x),\xi)$ is a strictly   monotone operator and verifies 
  \[
	  |\aop(x,v(x),\xi)|\leq a_{0}(|\xi|^{p-1}+|v(x)|^{p-1}) +a_{1}(x) \quad
   \forall \xi\in \R^{N},\ \text{a.e. in $\Omega$}.
  \]
  Since $\Phi(x,v(x))\in (L^{\infty}(\Omega))^{N}$, classical
  arguments (see e.g. \cite{LL}, \cite{Lions}) allow to deduce that there
  exists a unique $ u$ such that 
\begin{equation} \label{3eq4}
   u \in \W, \qquad  \med(u)=0
  \end{equation}
  and 
 \begin{equation}
  \int_{\Om} \aop(x,v,\D u)\D\varphi \, dx =\int_{\Om}f\varphi \, dx -
 \int_{\Om} \Phi(x,v)\D\varphi \, dx\, , \quad \forall \varphi \in W^{1,p}(\Om)\,.
\label{weak_v}
 \end{equation}
It follows that we can consider the functional $\Gamma :
L^p(\Om)\longrightarrow L^p(\Om) 
$ defined by 
\[
\Gamma (v)=u\, , \qquad \forall v\in L^p(\Omega),
\]
where $u$ is the unique element of $W^{1,p}(\Omega)$ verifying
\eqref{3eq4} and \eqref{weak_v}.
We now prove that $\Gamma$ is a continuous and compact operator. 
\par
Let us  begin by proving that $\Gamma$ is continuous.  
Let $v_n\in L^p(\Om)$ such that $v_n\rightarrow v$ in $L^p(\Om)$. Up
to a subsequence (still denoted by $v_n$) $v_n \rightarrow v$ 
a.e. in $\Om$. Let $u_n=\Gamma (v_n) $ belonging to $W^{1,p}(\Omega)$
such  that $\med(u_n)=0$ and such that \eqref{weak_v} holds with $v_{n}$ in place of
$v$.

\noindent Choosing $\varphi =u_n$ as test function in \eqref{weak_v} and using
\eqref{ell} we obtain that
\[
\alpha \int_{\Omega} |\nabla u_{n}|^{p} dx \leq \int_{\Omega} |
fu_{n}| dx + \int_{\Omega} |\Phi(x,v_{n}) \nabla u_{n}| dx .
\]
Since $\med(u_n)=0$, from Poincar\'e-Wirtinger inequality \eqref{poincare}, \eqref{3eq1} and \eqref{3eq2}
Young inequality and Sobolev embedding theorem lead to
\begin{equation}
  \label{3eq5}
  \int_{\Omega} |\nabla u_{n}|^{p} dx \leq M
\end{equation}
where $M>0$ is a constant independent of $n$. Using again \eqref{poincare}, 
 it follows that $u_{n}$ is bounded in
$W^{1,p}(\Omega)$.

As a consequence and in view of \eqref{3eq3},  there exists a
subsequence (still denoted by $u_n$), a measurable function $u$ and
a field $\sigma$ belonging to  $(L^{p'}(\Omega))^{N}$ such that 
\begin{gather}
  \label{conv1}  u_n\rightharpoonup  u\quad\text{\rm weakly  in  } \W,\\
        \label{conv3} u_n\to u\quad\text{ strongly in
    }L^{p}(\Omega),
    \\
   \label{conv2} u_n\to u\quad\text{\rm a.e. in }\Omega,\\
    \label{conv3a} \aop(x,v_{n},\nabla u_{n})\rightharpoonup \sigma\quad\text{
      weakly in } (L^{p'}(\Omega))^{N}.
 \end{gather}
Since $\med(u_{n})=0$ for any $n$ and since $u\in
W^{1,p}(\Omega)$ the point-wise convergence of $u_{n}$ to $u$ implies
that $\text{med}(u)=0$. 

To get the continuity of $\Gamma$ it remains  to prove that $u=\Gamma
(v)$ that is $u$ satisfies \eqref{weak_v}. Using \eqref{weak_v} with
$v_{n}$ in place of $v$ and the test function $u_{n}-u$ we have
\begin{multline}\label{new}
\int_{\Omega} \aop(x,v_n,\D u_n)(\D u_n -\D u) dx =
\int_{\Omega} f (u_{n}-u) dx \\
- \int_{\Omega} \Phi(x,v_{n})(\nabla
u_{n}-\nabla u) dx.
\end{multline}
The point-wise convergence of $v_{n}$ and assumption \eqref{3eq2}
imply that $\Phi(x,v_{n})$ converges to $\Phi(x,v)$ almost everywhere
in $\Omega$ and in $L^{\infty}$ weak-* as $n$ goes to
infinity. Therefore from \eqref{conv1} and \eqref{conv3}, passing to the limit in the right-hand side of \eqref{new},
we obtain
\begin{equation}\label{minty0}
\lim_{n\rightarrow +\infty} \int_{\Omega}\aop(x,v_n,\D u_n)(\D u_n -\D u) dx = 0.
\end{equation}
Let us recall the classical arguments, so-called Minty arguments, (see \cite{LL}, \cite{Lions}) which
allow to identify $\sigma$ with $\aop(x,v,\nabla u)$. Let $\phi$
belonging to $(L^{\infty}(\Omega))^{N}$. 
Due to assumption \eqref{3eq3} and the convergence of $v_{n}$ the
Lebesgue theorem shows that for any $t\in\R$
\begin{equation*}
  \aop(x,v_{n},\nabla u+t\phi) \rightarrow \aop(x,v,\nabla u+t\phi) \text{
    strongly in $(L^{p'}(\Omega))^{N}$}.
\end{equation*}
By \eqref{conv3a} and \eqref{minty0}, it follows that for any $t\in\R$
\begin{align*}
\lim_{n\rightarrow +\infty} \int_{\Om} [\aop(x,v_n,\D u_n)- & \aop(x,v_{n},\D u + t\phi)](\D
u_n -\D u-t\phi ) dx 
\\
& = \int_{\Omega} [\sigma - \aop(x,v,\nabla u+t\phi)] t\phi dx.
\end{align*}
Using the monotone character \eqref{3eq3a} of $\aop$ we obtain that
for any $t\neq 0$
\begin{equation*}
  \sign(t)\int_{\Omega} [\sigma - \aop(x,v,\nabla u+t\phi)] \phi dx \geq 0.
\end{equation*}
Since $\aop(x,v,\nabla u +t \phi)$ converges strongly to
$\aop(x,v,\nabla u)$ in
$(L^{p'}(\Omega))^{N}$ as $t$ goes to zero, 
letting $t\rightarrow 0$ in the above inequality leads to 
\begin{equation*}
  \int_{\Omega} [\sigma-\aop(x,v,\nabla u)] \phi dx =0 
\end{equation*}
for any $\phi$ belonging to $(L^{\infty}(\Omega))^{N}$. We easily conclude that
\begin{equation}
  \label{conv4}
  \sigma= \aop(x,v,\nabla u).
\end{equation}

By using \eqref{conv3a} and \eqref{conv4} we can pass to the limit as $n\to +\infty$ in
\eqref{weak_v} with $v_{n}$ in place of $v$ and we get
 \begin{equation}
 \int_{\Om} \aop(x,v,\D u)\D\varphi dx =\int_{\Om}f\varphi dx - \int_{\Om} \Phi(x,v)\D\varphi dx\,, \quad \forall \varphi\in W^{1,p}(\Omega)\,.
\notag
\end{equation}

Since there exists a unique weak solution to \eqref{weak_v} with
median equal to zero we obtain that the whole sequence $u_{n}$
converges to $u$ in $L^{p}(\Omega)$ and $u=\Gamma (v)$. It follows
that $\Gamma$ is continuous.
\par
Compactness of $\Gamma $ immediatly follows.  Indeed, thank to the assumptions, for any $ v\in  L^p(\Omega)$,  we have  
\[
\int_\Omega |\D u |^p dx \le C\,,
\]
where $C$ is a constant depending on $\alpha$, $a_0$, $a_1$,
$\|c\|_{L^{\infty}(\Omega)}$, $\Om$, $N$, $p$ and $f$. Then, using
Poincar\'e-Wirtinger inequality and Rellich theorem, $u=\Gamma(v)$ belongs to a
compact set of $L^p(\Om)$. By choosing a ball of $L^p(\Om)$, $ B_{L^p}(0,
r)$ such that  
\[
\Gamma \left( B_{L^p}(0, r) \right ) \subset B_{L^p}(0, r)\,,
\]
Leray-Schauder fixed point theorem ensures the existence of at least one fixed point.
\end{proof}

\section{Existence result for renormalized solutions}

In this section we prove our main result which gives the existence of a renormalized solution to problem \eqref{pb}.

\begin{theorem} \label{exist_renorm}
 Assume \eqref{ell}--\eqref{comp}. If the datum $f$ belongs to $ L^{1}(\Om)$, then there exists at least one renormalized solution $u$ to problem \eqref{pb} having $\med(u)=0$. 
\end{theorem}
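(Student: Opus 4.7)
The plan is to apply the by-now standard approximation-compactness scheme for renormalized solutions (as in \cite{DMOP, murat94}), adapted here to Neumann boundary conditions and to the non-dual lower order term $\Phi$.

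\medskip

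\noindent\textbf{Step 1: approximation.} I would set $f_n = T_n(f) - \frac{1}{|\Omega|}\int_\Omega T_n(f)\,dx$, so that $f_n\in L^\infty(\Omega)$, $\int_\Omega f_n\,dx = 0$ and $f_n\to f$ strongly in $L^1(\Omega)$, and I would approximate $\Phi$ by
\[
\Phi_n(x,s)=\Phi(x,T_n(s))\,\chi_{\{c(x)\le n\}}(x),
\]
which satisfies $|\Phi_n(x,s)|\le n(1+n^{p-1})$ and therefore falls under the hypotheses of Theorem \ref{exist_weak}. Applying that theorem yields $u_n\in W^{1,p}(\Omega)$ with $\med(u_n)=0$ solving the approximate Neumann problem with data $(f_n,\Phi_n)$.

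\medskip

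\noindent\textbf{Step 2: a priori estimates.} Testing the approximate equation with $T_k(u_n)\in W^{1,p}(\Omega)$, using \eqref{ell}, the growth condition \eqref{growthphi}, Poincar\'e--Wirtinger \eqref{poincare} (applicable since $\med(T_k(u_n))=0$), Sobolev embedding and Young's inequality exactly as in Remark 2.4, I would obtain
\[
\int_\Omega|\nabla T_k(u_n)|^p\,dx\le C(k+1),
\]
with $C$ independent of $n$. Standard Marcinkiewicz-type estimates, in the spirit of \cite{BBGGPV}, then provide bounds on $u_n$ and on $|\nabla u_n|$ in suitable Marcinkiewicz spaces, so that along a subsequence $u_n\to u$ almost everywhere in $\Omega$ with $T_k(u)\in W^{1,p}(\Omega)$ and $T_k(u_n)\rightharpoonup T_k(u)$ weakly in $W^{1,p}(\Omega)$ for every $k>0$; the median passes to the limit and gives $\med(u)=0$.

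\medskip

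\noindent\textbf{Step 3: strong convergence of truncations and renormalization estimate.} This is the main obstacle: I need to show $\nabla T_k(u_n)\to\nabla T_k(u)$ strongly in $(L^p(\Omega))^N$ for every $k>0$. The difficulty comes from the non-dual term $\diw\Phi$ whose coefficient $c$ lies only in $L^{N/(p-1)}(\Omega)$ (respectively $L^q(\Omega)$ with $q>N/(N-1)$ if $p=N$), so that a direct Minty argument as in the proof of Theorem \ref{exist_weak} is insufficient. I would follow the Boccardo--Murat--Blanchard approach: test the approximate equation with $\phi_\mu(T_k(u_n)-T_k(u))$, where $\phi_\mu(s)=s\exp(\mu s^2)$ and $\mu$ is chosen large enough so that the monotonicity of $\aop$ absorbs the contribution coming from $\Phi_n$ on the set $\{|u_n|\le k\}$; the usual monotonicity trick then gives almost everywhere convergence of $\nabla u_n$, and Vitali's theorem upgrades this to strong convergence of $\nabla T_k(u_n)$ in $(L^p(\Omega))^N$. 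Simultaneously, testing with $T_1(u_n-T_n(u_n))$ and exploiting the equi-integrability of $f_n$ together with the $\Phi$-control of Remark 2.4 yields
\[
\lim_{n\to+\infty}\frac{1}{n}\int_{\{|u_n|<n\}}\aop(x,u_n,\nabla u_n)\nabla u_n\,dx=0,
\]
which, combined with the a.e.\ convergence, produces \eqref{def3} for the limit $u$.

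\medskip

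\noindent\textbf{Step 4: passage to the limit in the renormalized formulation.} Fix $h\in W^{1,\infty}(\R)$ with support in $[-M,M]$ and $\varphi\in L^\infty(\Omega)\cap W^{1,p}(\Omega)$. Then $h(u_n)\varphi\in L^\infty(\Omega)\cap W^{1,p}(\Omega)$ is admissible in the approximate weak formulation, and, because of the cut-off by $h$, every integrand is supported in $\{|u_n|\le M\}$. The first and second terms in the analogue of \eqref{def4} pass to the limit via the strong $L^p$ convergence of $\nabla T_M(u_n)$ combined with the almost-everywhere convergence of $u_n$; the $\Phi_n$ terms pass to the limit by dominated convergence using \eqref{growthphi} and the fact that $|T_M(u_n)|\le M$; and the right-hand side converges by $f_n\to f$ in $L^1(\Omega)$ together with $h(u_n)\varphi\to h(u)\varphi$ almost everywhere and in $L^\infty(\Omega)$ weak-$*$. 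This gives \eqref{def4} for $u$ and concludes the proof.
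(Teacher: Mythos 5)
Your overall scheme (approximation, a priori estimates on truncations, compactness, convergence of truncated energies, passage to the limit) is the same as the paper's, but two of your steps contain genuine gaps. First, in Step 1 you cannot invoke Theorem \ref{exist_weak} after regularizing only $f$ and $\Phi$: that theorem also requires the \emph{strict} monotonicity \eqref{3eq3a} and the global growth condition \eqref{3eq3} on $\aop$, whereas the operator of Theorem \ref{exist_renorm} satisfies only \eqref{mon} (non-strict, $\geq 0$) and the local-in-$s$ growth \eqref{growth}. The paper therefore approximates the operator itself, setting $\aop_\varepsilon(x,s,\xi)=\aop(x,T_{1/\varepsilon}(s),\xi)+\varepsilon|\xi|^{p-2}\xi$; without such a regularization your approximate problems are not known to be solvable. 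The non-strictness of \eqref{mon} also undermines your Step 3: the Boccardo--Murat exponential test function $\phi_\mu$ and ``the usual monotonicity trick'' give almost everywhere convergence of $\nabla u_n$ only under strict monotonicity, which is not assumed here. The paper deliberately avoids claiming strong $L^p$ convergence of $\nabla T_k(u_n)$ and instead proves the energy convergence \eqref{4eq1}, identifies the weak limit by Minty's argument, and passes to the limit using only weak $L^1$ convergence of $\aop(x,T_k(u_n),\nabla T_k(u_n))\nabla T_k(u_n)$, which suffices for \eqref{def4}. Moreover, testing with $\phi_\mu(T_k(u_n)-T_k(u))$ without a cutoff $h_n(u_n)$ leaves an uncontrolled term $\int_{\{|u_n|>k\}}\aop(x,u_n,\nabla u_n)\phi_\mu'(\cdot)\nabla T_k(u)\,dx$, since $\aop(x,u_n,\nabla u_n)$ is not bounded in $(L^{p'})^N$ off the set $\{|u_n|\leq k\}$; controlling the resulting $h_n'$ term is exactly where \eqref{4eq00} is needed.

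Second, and this is the heart of the matter, you gloss over the proof of the uniform decay of truncated energies, i.e.\ \eqref{4eq00}, which is needed both for \eqref{def3} and for the convergence step just described. The obstruction is the term $\tfrac1n\int_\Omega c(x)(1+|T_n(u_n)|^{p-1})|\nabla T_n(u_n)|\,dx$: after H\"older, Poincar\'e--Wirtinger and Sobolev it is bounded by $\tfrac{C}{n}\|c\|_{L^{N/(p-1)}}(1+\|\nabla T_n(u_n)\|_{L^p}^p)$, which is of the \emph{same order} as the coercive term and cannot be made small by letting $n\to\infty$ (the a priori bound is $\|\nabla T_n(u_n)\|_{L^p}^p\leq M(n+n^p)$, not $C(n+1)$ as you claim --- your Step 2 bound would itself require $\|c\|_{L^{N/(p-1)}}$ small). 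Your appeal to ``the $\Phi$-control of Remark 2.4'' is circular, because that remark derives \eqref{ermk1} \emph{from} \eqref{def3}. The paper's Step 4 resolves this by splitting the integral over $\{|u_n|\leq R\}$ and $E_{\varepsilon,R}=\{|u_n|>R\}$, using the log-type estimate \eqref{eqog0} and the equi-integrability of $c$ to choose $R$ with $C\|c\|_{L^q(E_{\varepsilon,R})}<\alpha/2$ uniformly in $\varepsilon$, and then \emph{absorbing} the dangerous term into the ellipticity. Some such absorption argument is indispensable, and your sketch does not supply it.
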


\begin{proof}  The proof is divided into 7 steps. In a standard way we begin by 
introducing a sequence of approximate problems whose data are smooth enough and
converge in some sense to the datum $f$. Then we prove that the weak solutions
$\ueps$ to the approximate problems and their gradients
$\nabla \ueps $ satisfy a priori estimates; such estimates allow to prove that
$\ueps$ and $\nabla \ueps$ converge to a function $u$ and its gradient $\nabla u$
respectively. The final step consists in proving that $u$ is a renormalized
solution to \eqref{pb} by showing that it is possible to pass to the limit in
the approximate problems.  
\par\smallskip
\noindent{\sl Step 1. Approximate problems.}

\noindent  For $\vp >0$, let us define 
\[\aop_{\vp}(x ,s,\xi)=\aop(x, T_{\frac1\vp}(s), \xi) + \vp|\xi|^{p-2}\xi,
\]
\[\Phi_{\vp}(x ,s)=T_{\frac1\vp}(\Phi(x, s) )
\]
and $f_{\vp}\in L^{p'}(\Om) $ such that 
\begin{gather*}
	\int_{\Omega} f_\varepsilon\,  dx = 0, \\
f_{\vp} \rightarrow f\quad \text{strongly in} \>L^1(\Om),
\\
\|f_{\varepsilon}\|_{L^{1}(\Omega)} \leq \|f\|_{L^{1}(\Omega)}\,,\quad
\forall \varepsilon>0.
\end{gather*}
Let us denote by $u_\vp$ one weak solution belonging to $W^{1,p}(\Om)$ such that 
\[
\text{med} (u_\vp)=0
\]
and
 \begin{equation}
 \int_{\Om} \aop_\vp(x,u_\vp,\D u_\vp)\D\varphi dx +\int_{\Om} \Phi_\vp(x,u_\vp)\D\varphi dx
=\int_{\Om}f_\vp\varphi dx \,,
\label{appr_eps}
 \end{equation}
 for every $\varphi \in W^{1,p}(\Om)$. The existence of such a function $u_\vp$ follows from Theorem \ref{exist_weak}.
\par
\smallskip
\noindent {\sl Step 2. A priori estimates}
  
  \noindent Using $\varphi =T_k(u_\vp)$ for $k>0$, as test function in \eqref{appr_eps}
  we have 
\begin{gather*}
 \int_{\Om} \aop_\vp (x,u_\vp,\D u_\vp)\D T_k(u_\vp) dx +\int_{\Om} \Phi_\vp(x,T_k (u_\vp))\D T_k(u_\vp) dx
\\
= \int_{\Om}f_\vp T_k(u_\vp) dx. 
 \notag
\end{gather*}
 which implies, by \eqref{ell} and \eqref{growthphi},
 \begin{align*}
 \alpha \int_\Om |\D T_k(u_\vp)|^p dx \le  \int_\Om c(x) (1+
 |T_k(u_\vp)|^{p-1})|\D T_k (u_\vp)| dx +k \| f\|_{L^1(\Omega)}. 
 \notag
  \end{align*}
By Young inequality we get 
\begin{equation}
  \label{4eq0}
  \int_\Om |\D T_k(u_\vp)|^p dx \le M(k+k^p) 
\end{equation}
for a suitable positive constant $M$ which depends on the data, but  does not depend on $k$ and
$\varepsilon$. 

\noindent We deduce that, for every $k>0$,
 \[T_k(u_\vp) \text{ is bounded in } W^{1,p}(\Om).
 \] 
Moreover taking into account  \eqref{growth} and \eqref{4eq0}, we obtain that
for any $k>0$
\begin{equation*}
  \aop(x,T_{k}(u_{\varepsilon}),\nabla
  T_{k}(u_{\varepsilon})) \text{ is bounded in $(L^{p'}(\Omega))^{N}$}
\end{equation*}
uniformly with respect to $\varepsilon$.
Therefore there exists  a measurable function $\,u
:\Om \rightarrow \overline\R$ and for any $k>0$ there exists a function
$\sigma_{k}$ belonging  to $(L^{p'}(\Omega))^{N}$ 
such
that, up to a  subsequence still indexed by $\varepsilon$, 
\begin{gather}
  \label{rin_conv1}  u_\vp \rightarrow u \text{ a.e. in } \Om,
  \\
  \label{rin_conv2} T_k(u_\vp) \rightharpoonup T_k(u) \text{ weakly   in }
  W^{1,p}(\Om),
  \\
  \label{rin_conv3}   \aop (x,T_{k}(u_\vp),\D
  T_K(u_\vp))\rightharpoonup \sigma_k \text{ weakly  in } (L^{p'}(\Om))^N
  \quad\forall k>0.
\end{gather}

\noindent {\sl Step 3. The function $u$ is finite a.e. in $\Om$ and
  $\med(u)=0$.} 
    
Since $\med(u_{\varepsilon})=0$,  Poincar\'e-Wirtinger inequality allows
us to use a log-type estimate (see \cite{BeGu,BOP,Droniou00,DV} for similar
 non coercive  problems). We consider the function  
\begin{equation}
  \Psi_p(r)=\int_0^r\frac{1}{(1+|s|)^p} ds\,, \quad  \forall  r\in \R.
  \notag
\end{equation}
We observe that 
$\med(\Psi_p(u_\vp))=\med(u_\vp)=0$.
Using $\Psi_p(u_\vp)$ as test function in \eqref{appr_eps}, we get
\begin{align*}
  \int_{\Om} \aop_\vp(x,u_\vp, \D u_\vp) \frac {\D u_\vp}{(1+|u_\vp|)^p}
  dx +\int_{\Om} \Phi_\vp (x, u_\vp) \frac {\D u_\vp}{(1+|u_\vp|)^p}
  dx = \int_\Om f_\vp\Psi_p(u_\vp) dx. 
\end{align*}   
By ellipticity condition \eqref{ell}, growth condition
\eqref{growthphi} and since $\disp \| \Psi_p(u_\vp)\|_{L^{\infty}(\Omega)} \le
\frac{1}{p-1}$, we get  
\begin{align*}
  \alpha\int_{\Om}  \frac {|\D u_\vp|^p}{(1+|u_\vp|)^p}dx &\le
  \int_{\Om} c(x) (1+|u_\vp|^{p-1} )\frac {|\D u_\vp|}{(1+|u_\vp|)^p} dx
  + \frac{1}{p-1}\|f\|_{L^{1}(\Omega)}\\ 
  & \le  C\int_{\Om} c(x) \frac {|\D u_\vp|}{(1+|u_\vp|)} dx +
  \frac{1}{p-1}\|f\|_{L^{1}(\Omega)}, 
\end{align*}   
where $C$ is a generic and positive constant independent of $\varepsilon$.
By Young inequality we deduce 
\begin{equation}
  \int_{\Om}  \frac {|\D u_\vp|^p}{(1+|u_\vp|)^p}dx \leq  Cp'
  \|c\|_{L^{p'}(\Omega)}^{p'}+ \frac{p'}{p-1}\|f\|_{L^1(\Omega)} \label{stima}. 
\end{equation}
Let us define
\[
\Psi_1(u_\vp)=\int_0^{u_\vp}\frac{1}{(1+|s|)}ds=\sign(u_{\varepsilon})
\ln(1+|u_{\varepsilon}|) .
\]
By \eqref{stima} we have
\[
	\|\D\Psi_1(u_\vp)\|_{(L^p(\Omega))^N}\le C
\]
and since $\med(\Psi_1(u_\vp))=0$,  Poincar\'e-Wirtinger inequality
leads to
\[
	\|\Psi_1(u_\vp)\|_{L^p(\Omega)}\le C.
\]
According to the definition of $\Psi_{1}$ we obtain that
\begin{equation}\label{eqog0}
\sup_{\varepsilon>0}
\meas(\{x\in\Omega\,;\,|u_{\varepsilon}(x)|>A\})\leq \frac{C}{\ln(1+A)}
\end{equation}
and this implies that $u$ is finite almost everywhere in $\Omega$.
\par
Since $\med(\ueps)=0$ for any $\varepsilon>0$ we also have, for any
$k>0$, $\med(T_{k}(\ueps))=0$, for any $\varepsilon>0$. Due to the
point-wise convergence of $\ueps$ and to the fact that $T_{k}(u)\in
W^{1,p}(\Omega)$ we obtain that $\med(T_{k}(u))=0$ for any $k>0$. It
follows that $\med(u)=0$.
\par\smallskip
\noindent{\sl Step 4. }{ \sl We  prove}
\begin{equation} \label{4eq00}
  \lim_{n\rightarrow +\infty}\limsup_{\varepsilon\rightarrow 0}
  \frac{1}{n} \int_{\Omega}
  \aop_{\varepsilon}(x,u_{\varepsilon},\nabla u_{\varepsilon}) \nabla
  T_{n}(u_{\varepsilon}) dx = 0.
\end{equation}
Using the test function $\frac{1}{n}T_{n}(u_{\varepsilon})$ in \eqref{appr_eps}
we have
\begin{gather*}
 \frac{1}{n}\int_{\Om} \aop_\vp (x,u_\vp,\D u_\vp)\D T_n(u_\vp) dx +\frac{1}{n}\int_{\Om} \Phi_\vp(x,T_n (u_\vp))\D T_n(u_\vp) dx
\\
=\frac{1}{n} \int_{\Om}f_\vp T_n(u_\vp) dx\,, 
 \notag
\end{gather*}
which yields that
\begin{equation}
  \label{eqog1}
  \begin{split}
  \frac{1}{n}\int_{\Om} \aop_\vp (x,u_\vp,\D u_\vp) & \D T_n(u_\vp) dx
  \leq \frac{1}{n} \int_{\Om}|f_\vp|\times  |T_n(u_\vp)| dx
  \\
  & {} + \frac{1}{n}\int_{\Om} c(x) (1+|T_n (u_\vp)|^{p-1}) |\D T_n(u_\vp)| dx.
\end{split}
\end{equation}
Due to \eqref{rin_conv1} the sequence $T_{n}(u_\varepsilon)$ converges to
$T_{n}(u)$ as $\varepsilon$ goes to zero in $L^{\infty}(\Omega)$ weak-*. Since
$f_{\varepsilon}$ strongly converges to $f$ in $L^{1}(\Omega)$ it follows that 
\begin{equation*}
  \lim_{\varepsilon\rightarrow 0}\frac{1}{n}
   \int_{\Om}|f_\vp|\times  |T_n(u_\vp)| dx=
  \frac{1}{n}
   \int_{\Om}|f|\times  |T_n(u)| dx.
\end{equation*}
Recalling that $u$ is finite almost everywhere in $\Omega$, the
sequence $T_{n}(u)/n$ converges to 0 as $n$ goes to infinity in
$L^{\infty}(\Omega)$ weak-*. Therefore we deduce that
\begin{equation}
  \label{eqog2}
  \lim_{n\rightarrow+\infty}\lim_{\varepsilon\rightarrow 0}\frac{1}{n}
  \int_{\Om}|f_\vp|\times  |T_n(u_\vp)| dx= 0.
\end{equation}
If $R$  is a positive real number which will be chosen later, let us define for
any $\varepsilon>0$ the
set $E_{\varepsilon,R}=\{ x\in \Omega\,:\, |u_\varepsilon(x)| > R\}$. 
We have for any $n>R$
\begin{equation}\label{eqog3}
  \begin{split}
    \frac{1}{n}\int_{\Om} c(x) & (1+|T_n (u_\vp)|^{p-1})  |\D T_n(u_\vp)| 
    dx \\
    \leq {}& \frac{1}{n}\int_{\Omega\setminus E_{\varepsilon,R}} c(x) (1+|
    T_{R}(u_\vp)|^{p-1}) |\D T_{R}(u_\vp)|    dx 
    \\
    & {} + \frac{1}{n}\int_{ E_{\varepsilon,R}} c(x) (1+|T_n
    (u_\vp)|^{p-1}) |\D T_n(u_\vp)|    dx. 
  \end{split}
\end{equation}
H\"older inequality yields that
\begin{equation*}
	\begin{split}
		\frac{1}{n}\int_{\Omega\setminus E_{\varepsilon,R}} & c(x) (1+|
    T_{R}(u_\vp)|^{p-1})  |\D T_{R}(u_\vp)|    dx 
    \leq \frac{1+R^{p-1}}{n} \int_{\Omega} c(x) |\nabla T_R(u_\varepsilon)| dx 
    \\
    & {} \leq \frac{1+R^{p-1}}{n} \| c\|_{L^{p'}{(\Omega)}} \|\nabla T_R(u_\varepsilon) \|_{(L^p(\Omega))^N}
    \end{split}
\end{equation*}
and since $T_R(u_\varepsilon)$ is bounded in $W^{1,p}(\Omega)$ uniformly with respect to $\varepsilon$
we obtain 
\begin{equation}\label{eqog4}
	\lim_{n\rightarrow+\infty} \limsup_{\varepsilon\rightarrow 0}
    \frac{1}{n}\int_{\{|u_{\varepsilon}|\leq R\}} c(x) (1+|
    T_{R}(u_\vp)|^{p-1})  |\D T_{R}(u_\vp)|    dx 
=0.
\end{equation}
To control the second term of the right-hand side of \eqref{eqog3} we distinguish
the case $p<N$ and $p=N$. If $p<N$ we have
\[
\frac{p-1}{N}+\frac{(N-p)(p-1)}{Np}+\frac{1}{p}=1
\]
so that  H\"older inequality gives
\begin{equation*}
	\begin{split}  
		\frac{1}{n}\int_{E_{\varepsilon,R}} & c(x) (1+|T_n
     (u_\vp)|^{p-1}) |\D T_n(u_\vp)|    dx \leq  
     \frac{1}{n} \| c\|_{L^{N/(p-1)}(E_{\varepsilon,R})}  
     \\
     & \times \Big( \meas(\Omega)^{Np/((N-p)(p-1))} + \| T_n(u_\varepsilon) \|_{L^{pN/(N-p)}(\Omega)} \Big) \| \nabla T_n(u_\varepsilon) \|_{(L^p(\Omega))^N}.
 \end{split}
\end{equation*} 
Recalling that $\med(T_n(u_\varepsilon))=0$  Poincar\'e-Wirtinger inequality and Sobolev embedding theorem lead to
\begin{equation}\label{eqog5}
\begin{split}
	\frac{1}{n}\int_{E_{\varepsilon,R}}  c(x) & (1+|T_n
     (u_\vp)|^{p-1})  |\D T_n(u_\vp)|    dx\\ & \leq \frac{C}{n} 
     \| c\|_{L^{N/(p-1)}(E_{\varepsilon,R})}   
     \Big(  1  
     + \| \nabla T_n(u_\varepsilon) \|_{(L^p(\Omega))^N}^p
     \Big)
\end{split}
\end{equation}
where $C>0$ is a constant independent of $n$ and $\varepsilon$.
If $p=N$, since $c$ belongs to $L^q(\Omega)$ with $q>\frac{N}{N-1}$ similar
arguments lead to
\begin{equation}\label{eqog6}
	\begin{split}
		\frac{1}{n}\int_{E_{\varepsilon,R}}  c(x) &(1+|T_n
     (u_\vp)|^{p-1})  |\D T_n(u_\vp)|    dx \\
     & \le \frac{C}{n} 
     \| c\|_{L^{q}(E_{\varepsilon,R})}  \Big(  1  
     + \| \nabla T_n(u_\varepsilon) \|_{(L^p(\Omega))^N}^p
     \Big)
\end{split}
\end{equation}
where $C>0$ is a constant independent of $n$ and $\varepsilon$.
\par
In view of \eqref{eqog0} and the equi-integrability of $c$ in $L^{q}
(\Omega)$ (with  $q=N/(p-1)$ if $p<N$  and $q>N/(N-1)$ if $p=N$) 
let $R>0$ such that for any $\varepsilon>0$
\begin{equation}\label{eqog6b}
	C \| c\|_{L^{q}(E_{\varepsilon,R})}  < \frac{\alpha}{2}\,,
\end{equation}
where $\alpha$ denotes the ellipticity constant in \eqref{ell}. 
Using the ellipticity condition \eqref{ell} together with
\eqref{eqog1}--\eqref{eqog6b} leads to
\[
  \frac{1}{n}\int_{\Om} \aop_\vp (x,u_\vp,\D u_\vp)  \D T_n(u_\vp) dx
  \leq \frac{C}{n} \|c\|_{L^q(\Omega)}+\omega(\varepsilon,n) 	
\]
with $q=N/(p-1)$ if $p<N$ and $q>N/(N-1)$ if $p=N$ and where
$\omega(\varepsilon,n)$ is such that
$\lim_{n\rightarrow\infty}\limsup_{\varepsilon\rightarrow0}
\omega(\varepsilon,n)=0$. 
\par
It follows that \eqref{4eq00} holds.

\par\smallskip
\noindent{\sl Step 5. We  prove that for any $k>0$}
\begin{equation}
  \label{4eq1}
  \begin{split}
  \lim_{\varepsilon \rightarrow 0} \int_{\Omega}
   (\aop(x,T_{k}(u_\varepsilon)&,\nabla
  T_{k}(u_\varepsilon))-\aop(x,T_{k}(\ueps),\nabla T_{k}(u))) \\
  & \cdot (\nabla
  T_{k}(u_{\varepsilon})-\nabla T_{k}(u)) dx =0.
  \end{split}
\end{equation} 
Let $h_{n}$ defined by 
\begin{equation}  \label{h_n}
h_n(s)=
   \begin{cases}
       0    &\quad \text{if } |s|>2n, \\
\displaystyle\frac{2n-|s|}{n}     &\quad  \text{if } n<|s|\le 2n,\\
1     &\quad \text{ if } |s|\le n\,.
 \end{cases}
\end{equation}
Using the admissible test function $h_{n}(u_{\varepsilon})
(T_{k}(u_{\varepsilon})-T_{k}(u))$ to \eqref{appr_eps} we have
\begin{equation}
  \label{4eq2}
  \begin{split}
  \int_{\Omega} h_{n}(u_{\varepsilon}) & \aop(x,u_{\varepsilon},\nabla
  u_{\varepsilon})  (\nabla T_{k}(u_{\varepsilon})- \nabla T_{k}(u)) dx \\
  & {} = A_{k,n,\varepsilon} +  B_{k,n,\varepsilon} +
  C_{k,n,\varepsilon} +  D_{k,n,\varepsilon} +   E_{k,n,\varepsilon}
  \end{split}
  \end{equation}
  with
  \begin{gather*}
  A_{k,n,\varepsilon} =\int_{\Omega} h_{n}(\ueps) f_{\varepsilon}
  (T_{k}(\ueps)-T_{k}(u))dx,
 \\
 B_{k,n,\varepsilon}= - \int_{\Omega} h_{n}(\ueps) \Phi_{\varepsilon}(x,\ueps) (\nabla
 T_{k}(\ueps)-\nabla T_{k}(u)) dx,
 \\
 C_{k,n,\varepsilon}= - \int_{\Omega} h'_{n}(\ueps) \Phi_{\varepsilon}(x,\ueps) \nabla \ueps
 (T_{k}(\ueps)-T_{k}(u)) dx,
  \\
 D_{k,n,\varepsilon} = - \int_{\Omega} h'_{n}(\ueps)
 \aop_{\varepsilon}(x,u_\varepsilon,\nabla
 u_\varepsilon) \nabla \ueps  (T_{k}(u_{\varepsilon}) - T_{k}(u))dx,
 \\
 E_{k,n,\varepsilon}=  -\varepsilon \int_{\Omega} h_{n}(\ueps) |\nabla \ueps |^{p-2} \nabla
 \ueps ( \nabla   T_{k}(u_{\varepsilon}) -\nabla
 T_{k}(u))dx.
\end{gather*}
We now pass to the limit in \eqref{4eq2} first as $\varepsilon$ goes
to zero and then as $n$ goes to infinity.
\par
Due to the point-wise convergence of $\ueps$ the sequence
$T_{k}(\ueps)-T_{k}(u)$ converges to zero almost everywhere in
$\Omega$ and in
$L^{\infty}(\Omega)$ weak* as $\varepsilon$ goes to zero. Since
$f_{\varepsilon}$ converges to $f$ 
strongly in $L^{1}(\Omega)$ we obtain that
\begin{equation*}
  \lim_{\varepsilon \rightarrow 0}  A_{k,n,\varepsilon} = \lim_{\varepsilon\rightarrow 0} \int_{\Omega} h_{n}(u_{\varepsilon}) f_{\varepsilon}
  (T_{k}(\ueps)-T_{k}(u))dx =0.
\end{equation*}
For $\varepsilon<1/n$  we have
$h_{n}(s)\Phi_{\varepsilon}(x,s)=h_{n}(s)\Phi(x,s)$ for any $s\in\R$
and a.e. in $\Omega$. Using the point-wise convergence of $\ueps$ 
  $h_{n}(\ueps) \Phi_{\varepsilon}(x,\ueps)$ converges to
$h_{n}(u)\Phi(x,u)$ a.e. in $\Omega$ as $\varepsilon$ goes to zero while by
\eqref{growthphi} we have
$h_{n}(u_{\varepsilon})|\Phi_{\varepsilon}(x,\ueps)|\leq (1+(2n)^{p-1}) c(x)$. It
follows that $h_{n}(u_{\varepsilon})\Phi_{\varepsilon}(x,\ueps)$
converges to $h_{n}(u)\Phi(x,u)$ strongly in $(L^{q}(\Omega))^{N}$
with $q=N/(p-1)$ if $N>p$ and $q>N/(N-1)$ if $N=p$. Due to
\eqref{rin_conv2} we deduce that
\begin{equation*}
  \lim_{\varepsilon\rightarrow 0} B_{k,n,\varepsilon}= - \lim_{\varepsilon\rightarrow 0} \int_{\Omega} h_{n}(\ueps)
  \Phi_{\varepsilon}(x,\ueps) (\nabla 
 T_{k}(\ueps)-\nabla T_{k}(u)) dx =0.
\end{equation*}
With  arguments already used we also have for any $n\geq 1/\varepsilon$
\begin{equation*}
 \lim_{\varepsilon\rightarrow 0} C_{k,n,\varepsilon}= - 
  \lim_{\varepsilon\rightarrow 0}   \int_{\Omega} h'_{n}(\ueps)
\Phi_{\varepsilon}(x,\ueps) \nabla \ueps 
 (T_{k}(\ueps)-T_{k}(u)) dx =0.
\end{equation*}
Since 
\begin{equation*}
|D_{k,n,\varepsilon}|
 \leq
 \frac{2k}{n} \int_{\{|\ueps|\leq 2n\}} \aop_{\varepsilon}(x,u_\varepsilon,\nabla
 u_\varepsilon) \nabla \ueps dx
\end{equation*}
and due to \eqref{4eq00} we obtain that
\begin{equation*}
  \lim_{n\rightarrow 0}\limsup_{\varepsilon\rightarrow 0 } D_{k,n,\varepsilon}=0.
\end{equation*}
The identification $h_{n}(\ueps) |\nabla \ueps |^{p-2} \nabla
\ueps = h_{n}(\ueps) |\nabla T_{2n}(\ueps) |^{p-2} \nabla
T_{2n}(\ueps)$ a.e.  in $\Omega$ and  estimate \eqref{4eq0} imply that
\[
h_{n}(\ueps) |\nabla \ueps |^{p-2} \nabla
\ueps
( \nabla   T_{k}(u_{\varepsilon}) -\nabla
T_{k}(u))
\]
is bounded in $L^{1}(\Omega)$ uniformly with respect to $\varepsilon$.
It follows that 
\[
	\varepsilon h_{n}(\ueps) |\nabla \ueps |^{p-2} \nabla
\ueps
( \nabla   T_{k}(u_{\varepsilon}) -\nabla
T_{k}(u))
\]
converges to 0 strongly in $L^1(\Omega)$ so that
\begin{equation*}
  \lim_{\varepsilon\rightarrow 0 } E_{k,n,\varepsilon}=0.
\end{equation*}

As a consequence we obtain that for any $k>0$
\begin{equation*}
 \lim_{n\rightarrow \infty}\limsup_{\varepsilon\rightarrow 0 }  \int_{\Omega} h_{n}(u_{\varepsilon})  \aop(x,u_{\varepsilon},\nabla
  u_{\varepsilon})  (\nabla T_{k}(u_{\varepsilon})- \nabla T_{k}(u))
  dx= 0.
\end{equation*}
Recalling that for any $n>k$, we have 
\[
	h_{n}(u_{\varepsilon})
\aop(x,u_{\varepsilon},\nabla   u_{\varepsilon}) \nabla
T_{k}(u_{\varepsilon})=
\aop(x,u_{\varepsilon},\nabla   u_{\varepsilon}) \nabla
T_{k}(u_{\varepsilon}) \quad\text{a.e. in $\Omega$.}
\]
It follows that 
\begin{multline}
  \label{4eq3}
  \limsup_{\varepsilon\rightarrow 0 }\int_{\Omega}  \aop(x,u_{\varepsilon},\nabla
  u_{\varepsilon})  \nabla T_{k}(u_{\varepsilon}) 
  dx 
  \\
  \leq  \lim_{n\rightarrow \infty}\limsup_{\varepsilon\rightarrow 0 }  \int_{\Omega} h_{n}(u_{\varepsilon})  \aop(x,u_{\varepsilon},\nabla
  u_{\varepsilon})   \nabla T_{k}(u)   dx.
\end{multline}
According to the definition of $h_{n}$ we have
\[
h_{n}(\ueps)  \aop(x,u_{\varepsilon},\nabla
  u_{\varepsilon}) = h_{n}(\ueps)  \aop(x,T_{2n}(u_{\varepsilon}),\nabla
  T_{2n}(u_{\varepsilon})) \text{ a.e. in $\Omega$}
\]
so that \eqref{rin_conv1} and \eqref{rin_conv3} give
\begin{equation}
  \label{4eq4}
\lim_{\varepsilon\rightarrow 0 }  \int_{\Omega} h_{n}(u_{\varepsilon})  \aop(x,u_{\varepsilon},\nabla
  u_{\varepsilon})   \nabla T_{k}(u)   dx = \int_{\Omega} h_{n}(u)
  \sigma_{2n} \nabla T_{k}(u) dx.
\end{equation}
\par
If $n>k$ we have
\[
  \aop(x,T_{n}(u_{\varepsilon}),\nabla 
  T_{n}(u_{\varepsilon})) \chi_{\{|\ueps|<k\}} =  \aop(x,T_{k}(u_{\varepsilon}),\nabla 
  T_{k}(u_{\varepsilon})) \chi_{\{|\ueps|<k\}} 
\]
almost everywhere in $\Omega$. From \eqref{rin_conv1} and
\eqref{rin_conv3} it follows that
\[
\sigma_{n} \chi_{\{|u|<k\}} = \sigma_{k} \chi_{\{|u|<k\}} 
\text{  a.e. in $\Omega\setminus\{|u|=k\}$} 
\]
and then we obtain for any $n>k$
\[
\sigma_{n} \nabla T_{k}(u)= \sigma_{k} \nabla T_{k}(u) \text{ a.e. in
  $\Omega$.}
\]
Therefore \eqref{4eq3} and \eqref{4eq4} allow to conclude that
\begin{equation}
  \label{4eq5}
  \limsup_{\varepsilon \rightarrow 0} \int_{\Omega}
  \aop(x,T_{k}(u_\varepsilon),\nabla T_{k}(u_\varepsilon)) \nabla
  T_{k}(u_{\varepsilon}) dx \leq \int_{\Omega} \sigma_{k} \nabla
  T_{k}(u) dx.
\end{equation}
We are now in a position to prove \eqref{4eq1}. Indeed 
the monotone character of $\aop$ implies that for any $\varepsilon>0$
\begin{multline}
  \label{4eq6}
  0 \leq 
   \int_{\Omega}
  (\aop(x,T_{k}(u_\varepsilon),\nabla
  T_{k}(u_\varepsilon))  -\aop(x,T_{k}(\ueps),\nabla T_{k}(u))) 
  \\
   \cdot (\nabla
  T_{k}(u_{\varepsilon})-\nabla T_{k}(u)) dx.\qquad\quad
\end{multline}

Moreover,  using the point-wise convergence of $T_{k}(\ueps)$
and assumption \eqref{growth}, the function
$\aop(x,T_{k}(\ueps),\nabla T_{k}(u))$ converges to
$\aop(x,T_{k}(u),\nabla T_{k}(u))$ strongly in $(L^{p'}(\Omega))^{N}$. Writing
\begin{equation*}
  \begin{split}
      \int_{\Omega}
  (\aop(x,T_{k}(u_\varepsilon), & \nabla
  T_{k}(u_\varepsilon))-\aop(x,T_{k}(\ueps),\nabla T_{k}(u))) (\nabla
  T_{k}(u_{\varepsilon})-\nabla T_{k}(u)) dx
    \\
{} = {}&   \int_{\Omega}
  \aop(x,T_{k}(u_\varepsilon),\nabla
  T_{k}(u_\varepsilon)) (\nabla
  T_{k}(u_{\varepsilon})-\nabla T_{k}(u)) dx
  \\
  & {} -  \int_{\Omega}
  \aop(x,T_{k}(\ueps),\nabla T_{k}(u)) (\nabla
  T_{k}(u_{\varepsilon})-\nabla T_{k}(u)) dx,
\end{split}
\end{equation*}
using \eqref{4eq5} and  \eqref{4eq6} allow to conclude that
\eqref{4eq1} holds for any $k>0$.

\par\smallskip

\noindent{\sl Step 6. We prove in this step that for any $k>0$}
\begin{gather}
  \aop(x,T_{k}(u),\nabla T_{k}(u)) = \sigma_{k} \label{4eq7}
\\ 
\aop(x,T_{k}(u_\varepsilon),  \nabla
  T_{k}(u_\varepsilon)) \nabla T_{k}(\ueps) \rightharpoonup
  \aop(x,T_{k}(u),\nabla T_{k}(u))\nabla T_{k}(u) \label{4eq8}
\end{gather}
weakly in $L^{1}(\Omega)$ as $\varepsilon$ goes to zero.
\par
From \eqref{4eq6} we have for any $k>0$
\begin{equation*}
   \lim_{\varepsilon \rightarrow 0} \int_{\Omega}
  \aop(x,T_{k}(u_\varepsilon),\nabla T_{k}(u_\varepsilon)) \nabla
  T_{k}(u_{\varepsilon}) dx = \int_{\Omega} \sigma_{k} \nabla
  T_{k}(u) dx.
\end{equation*}
The monotone character of $\aop$ and the usual Minty argument imply
\eqref{4eq7}.
\par
From \eqref{4eq1} we get  
\[
(\aop(x,T_{k}(u_\varepsilon),  \nabla
  T_{k}(u_\varepsilon))-\aop(x,T_{k}(\ueps),\nabla T_{k}(u))) (\nabla
  T_{k}(u_{\varepsilon})-\nabla T_{k}(u)) \rightarrow 0
\]
strongly in $L^{1}(\Omega)$ as $\varepsilon$ goes to zero.
Using \eqref{rin_conv2} and recalling that the sequence
$\aop(x,T_{k}(\ueps),\nabla 
T_{k}(u)))$ converges to $\aop(x,T_{k}(u),\nabla T_{k}(u)))$ strongly
in $(L^{p'}(\Omega))^{N}$ the monotone character of $\aop$ leads to \eqref{4eq8}.

\par\smallskip
\noindent {\sl  Step 7. We are now in a position to pass to the limit in the
	approximated problem.
}
\par
Let $h$ be a function in $W^{1,\infty}(\R)$ with compact support, contained in the interval $[-k,k]$, $k>0$ and let $\varphi\in W^{1,p}(\Omega)\cap L^{\infty}(\Om)$.
   Using $\varphi h(u_\vp)$ as a test function in the approximated problem we have 
   \begin{multline}
  \int_{\Om} h(u_\vp) \aop_{\varepsilon} (x,u_\vp, \D u_\vp) \D\varphi
  dx + \int_{\Om} h'(u_\vp) \aop_{\varepsilon} (x,u_\vp, \D u_\vp) \D u_\vp   \varphi
  dx  \label{rin_eps}\\ 
 +   \int_{\Om} h(u_\vp)\Phi_\vp (x, u_\vp)  \D\varphi dx + \int_{\Om} h'(u_\vp)\Phi_\vp (x, u_\vp)  \D u_\vp \varphi dx \\
 = {} \int_\Om f_\vp\varphi h(u_\vp) dx.
    \end{multline}
We want to pass to the limit in this equality. Since $\text{supp}\, h$
is contained in the interval  
$[-k,k]$, by the strong converge of $f_\vp$ to $f$ and \eqref{rin_conv1}
we immediatly obtain 
\[
\lim_{\vp\to0} \int_\Om f_\vp\varphi h(u_\vp) dx= \int_\Om f\varphi h(u) dx.
\]
Moreover by growth condition \eqref{growthphi} and  \eqref{rin_conv1},
using Lebesgue convergence theorem we deduce that 
\begin{equation}
\lim_{\vp\to0}\int_{\Om} h(u_\vp)\Phi_\vp (x, u_\vp)  \D\varphi
dx=\int_{\Om} h(u)\Phi (x, u)  \D\varphi dx.\notag 
\end{equation}
Analogously from \eqref{rin_conv2} we obtain
\begin{align*}
\lim_{\vp\to0}\int_{\Om} h'(u_\vp)\Phi_\vp (x, u_\vp)  \D u_\vp
\varphi dx & =\lim_{\vp\to0}\int_{\Om} h'(u_\vp)\Phi (x, T_{k}(u_\vp))  \D T_k(u_\vp ) \varphi dx\\
& = \int_{\Om} h'(u)\Phi (x, u)  \D T_k(u) \varphi dx.
\end{align*}
In view of the definition of $\aop_{\varepsilon}$ and since
$\varepsilon |\nabla T_{k}(\ueps)|^{p-2}\nabla T_{k}(\ueps)$ converges to
zero strongly in $(L^{p'}(\Omega))^{N}$ as $\varepsilon$ goes to zero,
 {\eqref{rin_conv3}} and \eqref{4eq7} imply that
\begin{align*}
\lim_{\vp\to0} \int_{\Om} h(u_\vp) \aop_{\varepsilon}(x,u_\vp, \nabla T_k(u_\vp)
) & \D\varphi dx \\
 & =\lim_{\vp\to0} \int_{\Om} h(u_\vp) \aop (x,T_{k}(u_\vp), \D T_k(u_\vp)
)\D\varphi dx 
\\
& = \int_{\Om} h(u) \aop (x,u, \D T_k(u) )\D\varphi dx.
\end{align*}
From \eqref{4eq8} we get 
\begin{align*}
\lim_{\vp\to0} \int_{\Om} h'(u_\vp) \aop_{\varepsilon} & (x,u_\vp, \D
u_\vp) \D u_\vp   \varphi dx  \\
& {} = \lim_{\vp\to0} \int_{\Om} h'(u_\vp) \aop (x,T_{k}(u_\vp) , \D T_k(u_\vp))
\D T_k(u_\vp))   \varphi dx \\ 
&{} = \int_{\Om} h'(u) \aop (x,u, \D T_k(u)) \D T_k(u)   \varphi dx. \notag
\end{align*}
Therefore by passing to the limit in \eqref{rin_eps} we obtain condition
\eqref{def4}  in the definition of renormalized solution. 
The decay of the truncated energy \eqref{def3} is a consequence of
\eqref{4eq00} and \eqref{4eq8}. Since $u$ is finite almost everywhere
in $\Omega$ and since $T_{k}(u)\in W^{1,p}(\Omega)$ for any $k>0$ we
can conclude that $u$ is a renormalized solution to \eqref{pb} and
that $\med(u)=0$.
\end{proof}

\section{Stability result and further remarks}

This section is devoted to  state a stability result and to prove
that if the right-hand side $f$ is  regular enough, under additional
assumptions on $\aop$,  then any renormalized  solution is also a weak solution.

For $\varepsilon>0$ let $\feps$ belonging to $L^{1}(\Omega)$ and
$\Phieps : \Omega\times \R \mapsto \R^{N}$ a Carath\'eodory function.
Assume that there exists $c\in L^{q}(\Omega)$ with $q=N/(p-1)$ if
$p<N$ and  $q>N/(N-1)$ if $p=N$ such that for any $\varepsilon>0$ 
\begin{equation}\label{stab00}
  \left\vert \Phieps(x,s)\right\vert \leq c(x)(\left\vert
    s\right\vert ^{p-1}+1)
\end{equation}%
for almost everywhere in $\Omega$ and every $s\in\R$.
For any $\varepsilon>0$ let $\ueps$ be a renormalized solution (having null median) to the problem 
\begin{equation} \label{stab1}
    \begin{cases}
      -\diw\left( \aop\left( x, \ueps, \nabla \ueps\right)+ \Phieps (x,\ueps)
      \right) =\feps & \text{in}\ \Omega,\\
      \left( \aop\left( x, \ueps, \nabla \ueps\right)+ \Phieps (x,\ueps) \right)\cdot\underline n=0 & \text{on}\ \partial \Omega ,%
    \end{cases}%
\end{equation}%
where $\aop$ verifies \eqref{ell}--\eqref{growth}.
\par
Moreover assume that 
\begin{equation}
    \int_\Omega f_\varepsilon\, dx=0,\qquad   f_{\varepsilon}\rightarrow f\text{\ strongly in }L^{1}(\Omega)  
  \label{stab2}
\end{equation}%
and for almost every $x$ in $\Omega$
\begin{equation}\label{stab3}
    \begin{cases}
      \Phieps(x,s_{\varepsilon})\rightarrow \Phi(x,s) \\
      \text{for every sequence }s_{\varepsilon}\in\mathbb{R}\text{ such that } 
      s_{\varepsilon}\rightarrow s
    \end{cases}
\end{equation}%
where $\Phi$ is a Carath\'eodory function verifying (as a consequence
of \eqref{stab00}) the growth condition \eqref{growthphi}).
\par

\begin{theorem}\label{thstab}
Under the assumptions \eqref{stab00}, \eqref{stab1}, \eqref{stab2}, \eqref{stab3},
 up to a subsequence (still indexed by $\varepsilon$)
$\ueps$ converges to $u$ as $\varepsilon$ goes to zero where $u$  is a
renormalized solution to \eqref{pb} with null median. More precisely
we have 
\begin{gather}
  u_{\varepsilon}\rightarrow u\text{ a.e in }\Omega,
  \label{stab4}
  \\
  \aop(x,T_{k}(\ueps),\nabla T_{k}(\ueps)) \nabla T_{k}(\ueps) 
  \rightharpoonup \aop(x,T_{k}(u),\nabla T_{k}(u)) \nabla T_{k}(u) 
  \label{stab5}
\end{gather}
weakly $L^{1}(\Omega)$.
\end{theorem}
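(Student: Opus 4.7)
The plan is to run the same seven-step scheme as in the proof of Theorem \ref{exist_renorm}, with the crucial modification that the approximating family $(u_\varepsilon)$ now consists of \emph{renormalized} solutions rather than weak solutions of a regularized problem. Consequently, every time the existence proof chooses a test function $\varphi$ in the weak formulation \eqref{appr_eps}, one must instead insert $h_m(u_\varepsilon)\varphi$ in the renormalized formulation \eqref{def4} for $u_\varepsilon$, and then let $m\to+\infty$; the extra contributions involving $h'_m$ vanish thanks to \eqref{def3}, which holds for each $\varepsilon$, and to the growth estimate \eqref{ermk1}.

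First I would establish the a priori estimates. Testing \eqref{def4} with $T_k(u_\varepsilon)$ (modulo the $h_m$ cut-off just described), using \eqref{ell}, \eqref{stab00}, $\med(u_\varepsilon)=0$ and the Poincaré--Wirtinger inequality, I get $\|T_k(u_\varepsilon)\|_{W^{1,p}(\Omega)}\leq M(k+k^p)^{1/p}$ uniformly in $\varepsilon$, exactly as in \eqref{4eq0}. Testing next with $\Psi_p(u_\varepsilon)$ yields the log-type bound
\begin{equation*}
\sup_{\varepsilon>0}\meas\bigl(\{|u_\varepsilon|>A\}\bigr)\leq \frac{C}{\ln(1+A)},
\end{equation*}
which forces $(u_\varepsilon)$ to be Cauchy in measure and the pointwise limit $u$ to be finite a.e.\ with $\med(u)=0$. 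Standard extraction then supplies the convergences \eqref{rin_conv1}--\eqref{rin_conv3}, and in particular \eqref{stab4}.

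The heart of the proof is to reproduce the uniform truncated-energy decay \eqref{4eq00}. Using $\tfrac{1}{n}T_n(u_\varepsilon)$ as a test function (again through the $h_m$ approximation and passing to the limit $m\to+\infty$ via \eqref{def3}) gives the same identity as \eqref{eqog1} with $\Phi_\varepsilon$ in place of $\Phi$. All the subsequent estimates of Step 4 of the existence proof carry over verbatim: splitting $\Omega$ into $\{|u_\varepsilon|\leq R\}$ and $E_{\varepsilon,R}=\{|u_\varepsilon|>R\}$, applying Hölder and Sobolev on each piece, and invoking the equi-integrability of $c\in L^q(\Omega)$, yield \eqref{4eq00} uniformly in $\varepsilon$. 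Once \eqref{4eq00} is secured, Steps 5 and 6 of the existence proof apply without change: testing the renormalized equation for $u_\varepsilon$ against $h_n(u_\varepsilon)(T_k(u_\varepsilon)-T_k(u))$ and using \eqref{stab00} together with the pointwise convergence \eqref{stab3} shows that $h_n(u_\varepsilon)\Phi_\varepsilon(x,u_\varepsilon)\to h_n(u)\Phi(x,u)$ strongly in $(L^q(\Omega))^N$, which is enough to conclude \eqref{4eq1}, to identify $\sigma_k=\aop(x,T_k(u),\nabla T_k(u))$ by Minty's trick, and to obtain \eqref{stab5} as in \eqref{4eq8}. The renormalized identity \eqref{def4} for $u$ then follows by passing to the limit in \eqref{def4} written for $u_\varepsilon$, using \eqref{stab2} and \eqref{stab3} to handle the lower-order and right-hand side terms, and \eqref{4eq00}, \eqref{stab5} to handle the flux terms.

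The main obstacle is the uniform-in-$\varepsilon$ version of the decay \eqref{4eq00}: each $u_\varepsilon$ satisfies its own decay \eqref{def3}, but only qualitatively, so the uniformity must come from the structural bound obtained by the testing argument above, combined with the \emph{uniform} measure estimate \eqref{eqog0} and the equi-integrability of $c$. In particular, the threshold $R$ chosen in \eqref{eqog6b} must be independent of $\varepsilon$, which is exactly what the log-type estimate guarantees. A secondary technical point is the rigorous justification of the test-function manipulations via the $h_m$ approximation, which requires showing that the products $h'_m(u_\varepsilon)\Phi_\varepsilon(x,u_\varepsilon)\nabla u_\varepsilon\,\varphi$ and $h'_m(u_\varepsilon)\aop(x,u_\varepsilon,\nabla u_\varepsilon)\nabla u_\varepsilon\,\varphi$ tend to zero as $m\to+\infty$ for each fixed $\varepsilon$; this is handled by \eqref{def3} and \eqref{ermk1} applied to $u_\varepsilon$.
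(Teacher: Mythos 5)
Your proposal is correct and follows essentially the same route as the paper: the paper likewise reduces to the scheme of Theorem \ref{exist_renorm}, recovering the non-admissible test functions $T_k(u_\varepsilon)$ and $\Psi_p(u_\varepsilon)$ by inserting $h_n(u_\varepsilon)\varphi$ into \eqref{def4} and letting $n\to+\infty$, with the $h_n'$ terms killed by \eqref{def3} and \eqref{ermk1}, and then reruns Steps 3--7 using only admissible test functions. Your remarks on the uniformity of the truncated-energy decay and on the $\varepsilon$-independent choice of $R$ match the paper's treatment.
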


\begin{proof}[Sketch of proof]
  We mainly follow the arguments developed in the proof of Theorem
  \ref{exist_renorm}. 
  As usual, the crucial point is to obtain 
  {\em a priori}
  estimates, i.e.
  \begin{gather}\label{stab6a}
    T_{k}(\ueps) \text{ bounded in } W^{1,p}(\Omega), \\
    \aop(x,T_{k}(\ueps),\nabla T_{k}(\ueps)) \text{ bounded in }
    (L^{p'}(\Omega))^{N}\text{ for any $k>0$} \label{stab6b}
  \end{gather}
  and
  \begin{gather}
\label{stab6d}
\lim_{n\rightarrow +\infty}\limsup_{\varepsilon\rightarrow 0}
  \frac{1}{n} \int_{\Omega}
  \aop(x,u_{\varepsilon},\nabla u_{\varepsilon}) \nabla
  T_{n}(u_{\varepsilon}) dx = 0.
  \end{gather}
   
  Even if $T_{k}(u_{\varepsilon })$ is not an admissible test function in the
renormalized formulation (see Definition \ref{defrenorm}) it is well known that it can be achieved through
the following process.  Using $h=h_{n}$, where $h_n$ is defined in \eqref{h_n},  and $\varphi= T_{k}(\ueps)$ in the
renormalized formulation \eqref{def4} we have, for any $n>0$ and any $k>0$
\begin{multline}
  \label{5eq1}
  \int_{\Om} h_{n}(\ueps) \aop (x,\ueps, \D \ueps) \D T_{k}(\ueps)
  dx \\
  + \int_{\Om} h_{n}'(\ueps) \aop (x,\ueps, \D \ueps) \D \ueps   T_{k}(\ueps)
  dx  
  +   \int_{\Om} h_{n}(\ueps)\Phieps (x, \ueps)  \D T_{k}(\ueps) dx \\
  + \int_{\Om} h_{n}'(\ueps)\Phieps (x, \ueps)  \D \ueps T_{k}(\ueps) dx =
  \int_\Om \feps T_{k}(\ueps) h_{n}(\ueps) dx.
\end{multline}
We now pass to the limit as $n$ goes to infinity. In view of the
definition of $h_{n}$ for any 
$n>k$ we have
\begin{equation*}
  \int_{\Om} h_{n}(\ueps) \aop (x,\ueps, \D \ueps) \D T_{k}(\ueps)
  dx =   \int_{\Om} \aop (x,\ueps, \D \ueps) \D T_{k}(\ueps)
  dx 
\end{equation*}
and
\begin{equation*}
  \int_{\Om} h_{n}(\ueps)\Phieps (x, \ueps)  \D T_{k}(\ueps) dx= \int_{\Om} \Phi
  (x, \ueps)  \D T_{k}(\ueps) dx .
\end{equation*}
Since $\ueps$ is finite almost everywhere in $\Omega$, the function
$h_{n}(\ueps)$ converges to $1$ in $L^{\infty}(\Omega)$ weak*, so that
\begin{equation*}
  \lim_{n\rightarrow +\infty}  \int_\Om \feps T_{k}(\ueps) h_{n}(\ueps) dx =  \int_\Om \feps T_{k}(\ueps)  dx.
\end{equation*}
Due to \eqref{def3}, we get
\begin{equation*}
  \lim_{n\rightarrow +\infty}  \int_\Omega h_{n}'(\ueps) \aop (x,\ueps, \D \ueps) \D \ueps   T_{k}(\ueps)
  dx  = 0.
\end{equation*}
It remains to control the behavior of the forth term to the right
hand side of \eqref{5eq1}. Since we have
\begin{equation*}
  \bigg| \int_{\Om} h_{n}'(\ueps)\Phieps (x, \ueps)  \D \ueps T_{k}(\ueps) dx \bigg|
  \leq
  \frac{k}{n} \int_{\Omega} |\Phieps(x,\ueps)|\times |\D T_{2n}(\ueps)| dx
\end{equation*}
recalling \eqref{ermk1} we obtain that
\begin{equation*}
  \lim_{n\rightarrow+\infty}  \bigg| \int_{\Om} h_{n}'(\ueps)\Phieps (x, \ueps)
  \D \ueps T_{k}(\ueps) dx \bigg| =0 .
\end{equation*}
It follows that passing to the limit as $n$ goes to infinity in
\eqref{5eq1} leads to
\begin{multline}
  \label{stab6}
  \int_{\Om}  \aop (x,\ueps, \D \ueps) \D T_{k}(\ueps)
  dx 
  +   \int_{\Om}\Phi (x, \ueps)  \D T_{k}(\ueps) dx\\
   =
  \int_\Om \feps T_{k}(\ueps) dx
\end{multline}
and then assumptions on $\aop$, $\Phieps$ and $\feps$ give \eqref{stab6a} and \eqref{stab6b}.

For the same reasons following Step 2 in the proof of
Theorem \ref{exist_renorm}, there exists a function $u$  such that, up to a subsequence still indexed by
$\varepsilon$, 
\begin{gather*}
  \label{stab_conv1}  u_\vp \rightarrow u \text{ a.e. in } \Om,
  \\
  \label{stab_conv2} T_k(u_\vp) \rightharpoonup T_k(u) \text{ weakly   in }
  W^{1,p}(\Om),
  \\
  \label{stab_conv3}   \aop (x,T_{k}(u_\vp),\D
  T_k(u_\vp))\rightharpoonup \sigma_k \text{ weakly  in } (L^{p'}(\Om))^N
  \quad\forall k>0,
\end{gather*}
where $\sigma_{k}$ belongs to $L^{p'}(\Omega)$ for any $k>0$.
\par

Using a similar process to one used to obtain \eqref{stab6} we get
\begin{multline}\label{stab7}
  \int_{\Om} \aop(x,u_\vp, \D u_\vp) \frac {\D u_\vp}{(1+|u_\vp|)^p}
  dx 
  +\int_{\Om} \Phi_\vp (x, u_\vp) \frac {\D u_\vp}{(1+|u_\vp|)^p}
  dx\\
   = \int_\Om f_\vp\Psi_p(u_\vp) dx,
\end{multline}   
where $\Psi_{p}(r)=\int_{0}^{r}\frac{1}{(1+|s|)^{p}} ds$. 
Therefore the arguments developed in Steps 3 and 4 imply that $u$ is
finite almost everywhere in $\Omega$ and lead to \eqref{stab6d}.
Because the sequel of the proof uses mainly admissible test function
in the renormalized formulation and the monotone character of the
operator we can repeat the same arguments to show that  $u$ is a
renormalized solution to \eqref{pb} with null median. In particular following
Steps 5 and 6 (see \eqref{4eq8} in the proof of Theorem \ref{exist_renorm})
allow to obtain  that \eqref{stab5} hold.
\end{proof}

Now we  prove that if $\aop(x,r,\xi)$ is a
classical Leray-Lions operator verifying \eqref{3eq3} and if $f\in
L^{q}$ with $q\leq (p^{*})'$ then any renormalized solution to
\eqref{pb} is also a weak solution to \eqref{pb} belonging to $W^{1,p}(\Omega)$.

\begin{proposition}\label{prop5}
    Assume that  \eqref{ell}, \eqref{mon}, \eqref{growthphi}, \eqref{comp} and
  \eqref{3eq3} hold. Let $u$ be a renormalized solution to
  \eqref{pb} with $\med(u)=0$. If $f\in L^{q}(\Omega)$ with $q\leq
  (p^{*})'$ if $N>p$ 
  and $q<+\infty$ if $N=p$ then $u$ belongs to $W^{1,p}(\Omega)$ and 
   \begin{gather*}
    \int_{\Omega} \aop(x,u,\nabla u)\nabla v dx +\int_{\Omega}
    \Phi(x,u) \nabla v dx =\int_{\Omega} fv  dx
  \end{gather*}
  for any $v\in W^{1,p}(\Omega)$.
\end{proposition}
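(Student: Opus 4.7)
The plan is to derive an equation for $u$ on truncations, exploit the improved integrability of $f$ to obtain a $k$-uniform bound on $\|\nabla T_k(u)\|_{L^p(\Omega)}$, conclude $u\in W^{1,p}(\Omega)$, and finally upgrade the renormalized formulation to a bona fide weak formulation.

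\textbf{Step 1 (truncated energy equation).} I choose $h=h_n$ (with $h_n$ as in \eqref{h_n}) and $\varphi=T_k(u)\in W^{1,p}(\Omega)\cap L^\infty(\Omega)$ in \eqref{def4}. Letting $n\to\infty$, the two terms containing $h_n'(u)$ vanish: the $\aop$-term because of \eqref{def3}, and the $\Phi$-term because of \eqref{ermk1} (established in Remark 2.3); the remaining $h_n(u)$-terms tend to their natural limits by dominated convergence, since $u$ is finite a.e.\ and $h_n(u)\to 1$ boundedly. For every $k>0$,
\begin{equation*}
\int_\Omega \aop(x,u,\nabla u)\nabla T_k(u)\,dx + \int_\Omega \Phi(x,u)\nabla T_k(u)\,dx = \int_\Omega f\,T_k(u)\,dx.
\end{equation*}

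\textbf{Step 2 ($k$-uniform bound).} By \eqref{ell} the first term is at least $\alpha\|\nabla T_k(u)\|_{L^p}^p$. Since $f\in L^q$ with $q\ge(p^*)'$, Hölder together with $\med(T_k(u))=0$, Poincaré--Wirtinger \eqref{poincare} and Sobolev embedding yield $|\int_\Omega f\,T_k(u)|\le C\|\nabla T_k(u)\|_{L^p}$, absorbed via Young by $\tfrac{\alpha}{4}\|\nabla T_k(u)\|_{L^p}^p+C$. For the $\Phi$-term I use \eqref{growthphi} and split $\Omega=\{|u|\le R\}\cup\{|u|>R\}$. On $\{|u|\le R\}$ one has $|T_k(u)|\le R$ and, since $c\in L^{p'}(\Omega)$ (because $N\ge p$), the contribution is $\le C(R)\|\nabla T_k(u)\|_{L^p}$, again absorbed via Young. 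On $\{|u|>R\}$ the Hölder--Sobolev--Poincaré chain of Remark 2.3 bounds the contribution by $C\|c\|_{L^{N/(p-1)}(\{|u|>R\})}(1+\|\nabla T_k(u)\|_{L^p}^p)$; since $u$ is finite a.e.\ we have $|\{|u|>R\}|\to 0$, and the equi-integrability of $c$ allows me to fix $R$ once and for all so that $C\|c\|_{L^{N/(p-1)}(\{|u|>R\})}\le\alpha/4$. Collecting yields
$\|\nabla T_k(u)\|_{L^p(\Omega)}\le C$
uniformly in $k$ (the case $p=N$ is identical with $c\in L^q(\Omega)$, $q>N/(N-1)$).

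\textbf{Step 3 ($u\in W^{1,p}(\Omega)$).} Since $\med(T_k(u))=0$, Poincaré--Wirtinger also controls $\|T_k(u)\|_{L^p}$ uniformly in $k$. Hence $(T_k(u))_k$ is bounded in $W^{1,p}(\Omega)$ and converges a.e.\ to $u$; extracting a weakly convergent subsequence identifies the limit with $u$ and gives $\nabla T_k(u)\rightharpoonup \nabla u$ in $(L^p(\Omega))^N$, so $u\in W^{1,p}(\Omega)$.

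\textbf{Step 4 (weak formulation).} With $u\in W^{1,p}(\Omega)$, the growth assumption \eqref{3eq3} gives $\aop(x,u,\nabla u)\in (L^{p'}(\Omega))^N$, while \eqref{growthphi} combined with the embedding $W^{1,p}\hookrightarrow L^{p^*}$ yields $\Phi(x,u)\in (L^{p'}(\Omega))^N$. For $v\in W^{1,p}(\Omega)\cap L^\infty(\Omega)$ take $h=h_n$ and $\varphi=v$ in \eqref{def4}; letting $n\to\infty$ exactly as in Step 1 delivers the weak formulation for bounded $v$. Both sides are continuous on $W^{1,p}(\Omega)$ (the left by the $L^{p'}$-integrability just established, the right because $v\in L^{p^*}$ with $f\in L^q$, $q\ge(p^*)'$), and truncating a general $v\in W^{1,p}(\Omega)$ by $T_j(v)\to v$ extends the identity to every test function in $W^{1,p}(\Omega)$.

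The main obstacle is Step 2: the critical Sobolev exponent and the critical integrability $c\in L^{N/(p-1)}$ prevent a naive Young absorption of the $\Phi$-contribution. The splitting $\{|u|\le R\}\cup\{|u|>R\}$ combined with the equi-integrability of $c$, in the spirit of Step 4 of the proof of Theorem \ref{exist_renorm}, is what rescues the argument and delivers a constant independent of $k$.
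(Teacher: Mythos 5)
Your proposal is correct and follows essentially the same route as the paper: obtain the truncated energy identity by taking $h=h_n$, $\varphi=T_k(u)$ in \eqref{def4} and letting $n\to\infty$ via \eqref{def3} and \eqref{ermk1}, split the $\Phi$-contribution on $\{|u|\le R\}$ and $\{|u|>R\}$ and use the equi-integrability of $c$ to fix $R$ and absorb, deduce a $k$-uniform $W^{1,p}$ bound on $T_k(u)$ via Poincar\'e--Wirtinger, and finally pass to the limit in \eqref{def4} with $h=h_n$ and extend from bounded to general test functions using the $L^{p'}$-integrability of $\aop(x,u,\nabla u)$ and $\Phi(x,u)$. Note that you read the hypothesis as $q\ge (p^*)'$, which is indeed what the argument (and the paper's own proof, which bounds $\|T_k(u)\|_{L^{q'}}$ by Sobolev embedding) requires; the ``$q\le (p^*)'$'' in the statement is evidently a typo.
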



\begin{proof}

Let $u$ be a renormalized solution to \eqref{pb}. We can proceed as in the proof of Theorem \ref{thstab} and we obtain 
\eqref{stab6}. Then we have
\begin{equation}
  \label{5eq2}
  \int_{\Om}  \aop (x,u, \D u) \D T_{k}(u)
  dx 
  +   \int_{\Om}\Phi (x, u)  \D T_{k}(u) dx =
  \int_\Om f T_{k}(u) dx.
\end{equation}
Using \eqref{ell}, \eqref{growthphi} and the regularity of $f$ we
obtain
\begin{equation*}
	\alpha  \int_{\Omega} |\nabla T_{k}(u)|^{p} dx \leq \int_{\Omega} c(x)
  (1+|u|^{p-1}) |\nabla T_{k}(u)| dx + 
  \|f\|_{L^{q}(\Omega)} \|T_{k}(u)\|_{L^{q'}(\Omega)}.
\end{equation*}
Let $R>0$ be a real number which will be chosen later and denote
\begin{equation*}
  E_{R}=\{ x\in\Omega\,;\, |u(x)|>R\}.
\end{equation*}
Using again $\med(T_{k}(u))=0$, Poincar\'e-Wirtinger inequality and
Sobolev embedding Theorem we have
\begin{equation*}
  \begin{split}
	  \alpha  \int_{\Omega} |\nabla T_{k}(u)|^{p} dx \leq  &  \int_{\Omega} c(x)
   |\nabla T_{k}(u)| dx +  \int_{E_{R}} c(x) |u|^{p-1}
   |\nabla T_{k}(u)| dx
   \\
   & {} +  \int_{\Omega\setminus E_{R}} c(x) |u|^{p-1}
   |\nabla T_{k}(u)| dx +
  \|f\|_{L^{q}(\Omega)} \|T_{k}(u)\|_{L^{q'}(\Omega)} .
  \end{split}
  \end{equation*}
  If follows that
  \begin{equation*}
	  \begin{split}
    \int_{\Omega} |\nabla T_{k}(u)|^{p} dx \leq {} & C \Big( \| \nabla T_{k}(u)\|_{(L^{p}(\Omega))^{N}} +  
  \|c\|_{L^{q}(E_{R})} \| \nabla T_{k}(u)\|_{(L^{p}(\Omega))^{N}}^{p}
  \\
  & {} + R^{p-1}  \| \nabla T_{R}(u)\|_{(L^{p}(\Omega))^{N}}\Big)
  \end{split}
\end{equation*}
where $C>0$ depends on $\alpha$, $f$, $N$, $p$, $\meas(\Omega)$, $c$ but is independent
of $k$.
Since $u$ is finite a.e. in $\Omega$, $\lim_{R\rightarrow
  +\infty}\meas(E_{R})=0$. By the equi-integrability of $c$ in
$L^{q}(\Omega)$ we can choose $R>0$ such that $C
\|c\|_{L^{q}(E_{R})}$ is sufficiently small enough so that
\begin{equation*}
  \int_{\Omega} |\nabla T_{k}(u)|^{p} dx \leq  
   C \Big( \| \nabla T_{k}(u)\|_{(L^{p}(\Omega))^{N}} 
  + R^{p-1}  \| \nabla T_{R}(u)\|_{(L^{p}(\Omega))^{N}}\Big)
\end{equation*}
where $C>0$ does not depend on $k$. It follows that
\begin{equation*}
   \int_{\Omega} |\nabla T_{k}(u)|^{p} dx \leq  C
\end{equation*}
where $C>0$ depends on $\alpha$, $f$, $N$, $p$, $\Omega$, $c$, $R$ but is independent
of $k$. Since $\med(T_k(u))=0$ Poincar\'e-Wirtinger inequality implies that
$T_k(u)$ is bounded in $W^{1,p}(\Omega)$ uniformly with respect to $k$.
Therefore we conclude that $u$ belongs to $W^{1,p}(\Omega)$.
\par

Using the renormalized formulation \eqref{def4} with $h=h_{n}$ and
passing to the limit as $n$ goes to infinity leads to 
\begin{gather}\label{5eq3}
  \int_{\Omega} \aop(x,u,\nabla u)\nabla v dx +\int_{\Omega}
  \Phi(x,u) \nabla v dx =\int_{\Omega} fv  dx
\end{gather}
for any $v\in L^{\infty}(\Omega)\cap  W^{1,p}(\Omega)$. Due to growth
assumptions \eqref{3eq3} on $\aop$ and \eqref{growthphi} on $\Phi$ we
deduce that $\aop(x,u,\nabla u)$ and $\Phi(x,u)$ belong to
$(L^{p'}(\Omega))^{N}$. It follows that \eqref{5eq3} holds for any
$v\in W^{1,p}(\Omega)$.
\end{proof}

\section{Operator with a zero order term}

In this section we consider Neumann problems which are similar to \eqref{pb0}
with a zero order term. Precisely 
   let us consider the following Neumann problem
  \begin{equation} \label{newpb}
      \begin{cases}
        \lambda(x,u) -\diw\left( \aop\left( x, u, \nabla u\right)+ \Phi (x,u)
        \right) =f   & \text{in}\ \Omega, \\
        \left( \aop\left( x, u, \nabla u\right)+ \Phi (x,u) \right)\cdot\underline n=0 & \text{on}\ \partial \Omega%
      \end{cases}%
  \end{equation}%
  where $\lambda : \Omega\times \R$ is a Carath\'eodory function
  verifying 
  \begin{gather}
    \label{zeroorder0}
    \lambda(x,s)s \geq 0, \\
    \label{zeororder1}
    \forall k>0,\ \exists c_{k}>0\text{ such that } |\lambda(x,s)|\leq
    c_{k} \quad\forall |s|\leq k,\text{ a.e. in }\Omega,
    \\
    \label{zeroorder2}
    \forall s\in\R\quad  |\lambda(x,s)|\geq g(s) \text{ a.e. in $\Omega$}
  \end{gather}
  where $g$ is function such that $\lim_{s\rightarrow \pm \infty}g(s)=
  +\infty$.
  \par
  
  If $f$ belongs to $L^{1}(\Omega)$ and without additional growth assumptions on
  $g$ we cannot expect to have in general a  solution (in whatever
  sense) lying in $L^{1}(\Omega)$ and then we have similar difficulties
  to deal with \eqref{newpb}. In particular the presence of $\lambda(x,u)$ does
  not help to deal with the term $-\diw(\Phi(x,u))$ and we cannot follow the
  approach of \cite{AMST97,Droniou00,DV,Prignet97} which use the mean 
  value. However the ``median'' tool and some 
  modifications of the proof of Theorem \ref{exist_renorm} allow to show
  that there exists at least a renormalized solution to
  \eqref{newpb}:
  \begin{theorem} \label{exist_renorm_zero_order_term}
 Assume \eqref{ell}--\eqref{comp} and \eqref{zeroorder0}--\eqref{zeroorder2}. If the datum $f$ belongs to $ L^{1}(\Om) $ then there exists at least one renormalized solution $u$ to problem \eqref{newpb}. 
\end{theorem}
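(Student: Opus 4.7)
The plan is to mimic the seven-step strategy of Theorem~\ref{exist_renorm}, with the zero order term $\lambda(x,u)$ taking over the role played there by the compatibility condition and the median-zero normalization. First I would introduce the approximating problems: set $\lambda_\vp(x,s)=T_{1/\vp}(\lambda(x,s))$, which still satisfies \eqref{zeroorder0}--\eqref{zeororder1} and fulfils $|\lambda_\vp(x,s)|\geq \min(g(s),1/\vp)$; keep $\aop_\vp$, $\Phi_\vp$ and $f_\vp$ as in Step~1 of the proof of Theorem~\ref{exist_renorm}, but without requiring $\int_\Om f_\vp\,dx=0$; and seek $u_\vp\in W^{1,p}(\Omega)$ such that
\begin{multline*}
\int_{\Om}\lambda_\vp(x,u_\vp)\varphi\,dx +\int_{\Om}\aop_\vp(x,u_\vp,\D u_\vp)\D\varphi\,dx\\
{}+\int_{\Om}\Phi_\vp(x,u_\vp)\D\varphi\,dx=\int_{\Om}f_\vp\varphi\,dx
\end{multline*}
for every $\varphi\in W^{1,p}(\Omega)$. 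Existence of $u_\vp$ is a straightforward variant of Theorem~\ref{exist_weak}: the extra zero order term is bounded and monotone, so the Leray--Schauder fixed point argument of Section~3 transfers with obvious changes and no median prescription is needed on $u_\vp$ (the presence of $\lambda_\vp$ already rules out the up-to-constant invariance).

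For the a priori bounds, testing with $T_k(u_\vp)$ and using $\lambda_\vp(x,s)T_k(s)\geq 0$ yields, just as in \eqref{4eq0}, that $T_k(u_\vp)$ is bounded in $W^{1,p}(\Omega)$ for every $k>0$, uniformly in $\vp$. The decisive substitute for Step~3 of the proof of Theorem~\ref{exist_renorm} is the log estimate obtained by testing with $\Psi_p(u_\vp):=\int_0^{u_\vp}(1+|s|)^{-p}\,ds$: since $\lambda_\vp(x,u_\vp)\Psi_p(u_\vp)\geq 0$ and $\|\Psi_p\|_{L^{\infty}(\R)}\leq 1/(p-1)$, the same computation that led to \eqref{stima} gives
\begin{equation*}
\int_\Om \lambda_\vp(x,u_\vp)\Psi_p(u_\vp)\,dx+\frac{\alpha}{2}\int_\Om\frac{|\D u_\vp|^p}{(1+|u_\vp|)^p}\,dx\leq C,
\end{equation*}
with $C$ independent of $\vp$. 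Because $|\Psi_p(s)|\geq \beta:=(1-2^{-(p-1)})/(p-1)$ for $|s|\geq 1$ and the integrand $\lambda_\vp\Psi_p(u_\vp)$ has constant sign, the first integral controls $\int_{\{|u_\vp|>R\}}|\lambda_\vp(x,u_\vp)|\,dx$ for every $R\geq 1$; combined with \eqref{zeroorder2} this forces
\begin{equation*}
\sup_{\vp>0}\meas\{|u_\vp|>R\}\leq \frac{C}{g^{*}(R)},\qquad g^{*}(R):=\inf_{|s|>R}g(s),
\end{equation*}
with $g^{*}(R)\to +\infty$. This inequality plays the role of \eqref{eqog0} and permits the usual diagonal extraction of a measurable limit $u$, finite almost everywhere, with $T_k(u_\vp)\rightharpoonup T_k(u)$ weakly in $W^{1,p}(\Omega)$ for every $k>0$.

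From this point I would rerun Steps~4--7 of the proof of Theorem~\ref{exist_renorm} essentially verbatim. For the decay of the truncated energy (Step~4), the test function $T_n(u_\vp)/n$ produces an extra nonnegative contribution from $\lambda_\vp$ which is simply discarded, while the $\Phi_\vp$ term is controlled through the splitting \eqref{eqog3}--\eqref{eqog6b}, whose key ingredient---the equi-integrability of $c$ on $E_{\vp,R}=\{|u_\vp|>R\}$---is now supplied by the uniform decay displayed above. Steps~5 and~6 (Minty's argument and identification of the weak limits $\sigma_k$ of $\aop(x,T_k(u_\vp),\D T_k(u_\vp))$) do not see the zero order term and carry over unchanged. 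In Step~7 one tests the renormalized formulation with $\varphi h(u_\vp)$ and picks up the extra term $\int_\Om\lambda_\vp(x,u_\vp)\varphi h(u_\vp)\,dx$; since $h$ has compact support in $[-k,k]$, assumption \eqref{zeororder1} gives the uniform bound $|\lambda_\vp(x,u_\vp)h(u_\vp)|\leq c_k\|h\|_{L^{\infty}(\R)}$, so the pointwise convergence of $u_\vp$ and dominated convergence produce the limit $\int_\Om\lambda(x,u)\varphi h(u)\,dx$, and one recovers \eqref{def4} with this additional summand.

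The main obstacle, compared with Theorem~\ref{exist_renorm}, is precisely the uniform-in-$\vp$ decay of $\meas\{|u_\vp|>R\}$: without the median-zero Poincar\'e--Wirtinger device one cannot bound $\Psi_1(u_\vp)$ in $L^p(\Omega)$ from the log estimate alone. It is the combination of that log estimate with the pointwise lower bound \eqref{zeroorder2} on $|\lambda|$ that bypasses this difficulty, which is also the reason why the mere divergence $g(s)\to +\infty$ (with no quantitative rate) is enough for the conclusion, and why the median constraint present in Theorem~\ref{exist_renorm} is naturally absent from the statement here.
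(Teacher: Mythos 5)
Your overall architecture --- approximate, obtain the $T_k$ bounds and the log estimate, replace the median--zero Poincar\'e--Wirtinger device by a uniform decay of $\meas\{|u_\vp|>R\}$ extracted from \eqref{zeroorder2}, then rerun the energy-decay and Minty machinery --- is the paper's strategy, and your way of getting that decay (the sign of $\lambda_\vp\Psi_p(u_\vp)$ in the log estimate combined with $|\lambda|\geq g$) is a legitimate variant of the paper's $L^1$ bound on $T_{1/\vp}(g(\ueps))$. The genuine gap is at the very first step: the existence of the approximate solutions $u_\vp$. The function $\lambda$ is only assumed to satisfy the sign condition \eqref{zeroorder0}; it is not monotone in $s$, let alone strictly so, and after your truncation $\lambda_\vp=T_{1/\vp}(\lambda)$ it is bounded by $1/\vp$. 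Consequently the frozen operator $u\mapsto \lambda_\vp(x,u)-\diw\bigl(\aop_\vp(x,v,\nabla u)\bigr)$ is \emph{not} coercive on $W^{1,p}(\Omega)$: along constants $u\equiv t$ one has $\langle Au,u\rangle=\int_\Omega\lambda_\vp(x,t)t\,dx\leq \vp^{-1}|t|\meas(\Omega)$, which grows only linearly in $\|u\|_{W^{1,p}(\Omega)}$, so a bounded zero-order term does precisely \emph{not} rule out the up-to-constant degeneracy, and Leray--Lions surjectivity does not apply. Moreover your fixed-point map $\Gamma(v)=u$ need not be single-valued: if $\lambda(x,\cdot)$ is locally constant on some interval, two solutions of the frozen problem differing by a constant can coexist, since only strict monotonicity in $\xi$ is available. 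This is exactly why the paper inserts the strictly monotone, $L^p$-coercive regularization $\varepsilon|u_\vp|^{p-2}u_\vp$ into the approximate equation \eqref{appr_newpb_eps}; with it coercivity and uniqueness (hence the well-posedness of $\Gamma$) are restored, and the extra term is shown to be harmless in the limit. Your scheme, as written, does not produce the $u_\vp$ you go on to analyze.

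Two further claims are inaccurate, though repairable with ingredients you already have. First, Step~4 does not run ``essentially verbatim'': the estimate \eqref{eqog5} is obtained by applying Poincar\'e--Wirtinger \eqref{poincare} and the Sobolev embedding to $T_n(u_\vp)$, which uses $\med(T_n(u_\vp))=0$. Here one must write $T_n(u_\vp)=\bigl(T_n(u_\vp)-\med(T_n(u_\vp))\bigr)+\med(T_n(u_\vp))$ and invoke a uniform bound $|\med(u_\vp)|\leq M$ --- this is the paper's \eqref{neu05}. Such a bound does follow from your decay of $\meas\{|u_\vp|>R\}$ (choose $R$ with $C/g^{*}(R)<\meas(\Omega)/2$), but you never make this step, and it is the whole point of the adaptation. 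Second, Step~5 \emph{does} see the zero-order term: the test function $h_n(u_\vp)(T_k(u_\vp)-T_k(u))$ produces the additional term $\int_\Omega \lambda_\vp(x,u_\vp)h_n(u_\vp)(T_k(u_\vp)-T_k(u))\,dx$, which must be shown to vanish (it does, by dominated convergence using the local boundedness of $\lambda$), rather than being absent.
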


\begin{proof}[Sketch of proof]

 As in Theorem \ref{exist_weak}, a fixed point theorem
  and classical results of Leray-Lions give the existence of 
  $\ueps$ belonging to $W^{1,p}(\Omega)$ verifying
  \begin{multline}
    \varepsilon   \int_{\Omega}|\ueps|^{p-2} \ueps v dx +
    \int_{\Omega}\lambda(x,T_{1/\varepsilon}(\ueps)) v dx \\
    + 
    \int_{\Om}  \aop(x,T_{1/\varepsilon}(u_\vp),\D u_\vp)\D v dx
    \\
    +\int_{\Om} \Phi(x,T_{1/\varepsilon}(u_\vp))\D v  dx
    =\int_{\Om}T_{1/\varepsilon}(f) v dx 
    \label{appr_newpb_eps}
  \end{multline}
  for any $v$ lying in $W^{1,p}(\Omega)$. Due to the zero order term
  $\varepsilon |\ueps|^{p-2} \ueps +\lambda(x,T_{1/\varepsilon}(\ueps))$
  in the equation, we do not need any compatibility condition on
  $f$. The counter part is that we cannot expect to have (or to fix)
  $\med(\ueps)=0$ and then it yields another difficulties. In particular
  Steps 3 and 4 (see the proof of Theorem \ref{exist_renorm}) which use strongly the fact that the solution has a null
  median should be adapted in the case of the approximated problem
  \eqref{appr_newpb_eps}. Step 2 is unchanged and we have the following and
  additional estimate
  \begin{equation}
    \label{neu00}
    T_{1/\varepsilon}(g(\ueps)) \text{ bounded in } L^{1}(\Omega).
  \end{equation}
  Due to the behavior at infinity of the function $g$ we deduce that
  \begin{gather}
    \label{neu01}
    \lim_{A\rightarrow +\infty}\sup_{\varepsilon>0
    } \meas\{ x\in \Omega;\,
    |\ueps(x)|>A\} =0,
    \\
    \label{neu02}
    \forall \varepsilon>0 \quad |\med(\ueps)|\leq M
  \end{gather}
  where $M$ is a positive real number independent of $\varepsilon$. It
  follows (after extracting appropriate subsequence, see Step 2) that
  there exists a measurable function $u$ which  is finite almost
  everywhere in $\Omega$ such that
  \begin{gather*}
    u_\vp \rightarrow u \text{ a.e. in } \Om,
    \\
    T_k(u_\vp) \rightharpoonup T_k(u) \text{ weakly   in }
    W^{1,p}(\Om), \quad\forall k>0.
  \end{gather*}
  Step 4 which is crucial
  in dealing with renormalized solutions consists here in
  proving that
  \begin{equation} \label{neu03}
    \lim_{n\rightarrow +\infty}\limsup_{\varepsilon\rightarrow 0}
    \frac{1}{n} \int_{\Omega}
    \aop(x,u_{\varepsilon},\nabla u_{\varepsilon}) \nabla
    T_{n}(u_{\varepsilon}) dx = 0
  \end{equation}
  using the test function $T_{n}(\ueps)$ in \eqref{appr_newpb_eps}. Due to the sign
  condition \eqref{zeroorder0} the  contribution of the zero order terms
  \begin{equation*}
    \varepsilon   \int_{\Omega}|\ueps|^{p-2} \ueps T_{n} (\ueps)  dx +
    \int_{\Omega}\lambda(x,T_{1/\varepsilon}(\ueps)) T_{n}(\ueps) dx 
  \end{equation*}
  is positive.  It follows that the inequality \eqref{eqog1} holds: 
  \begin{equation*}
    \begin{split}
      \frac{1}{n}\int_{\Om} \aop (x,u_\vp,\D u_\vp) & \D T_n(u_\vp) dx
      \leq \frac{1}{n} \int_{\Om}|T_{1/\varepsilon}(f)|\times  |T_n(u_\vp)| dx
      \\
      & {} + \frac{1}{n}\int_{\Om} c(x) (1+|T_n (u_\vp)|^{p-1}) |\D T_n(u_\vp)| dx.
    \end{split}
  \end{equation*}

  Because we do not have in the present case 
  the property $\med(\ueps)=\med(T_{n}(\ueps))=0$
  we have to modify the estimate of the term
  \begin{equation}\label{neu04}
    \frac{1}{n} \int_{\Omega} c(x) (1+|T_{n}(\ueps)|^{p-1}) |\nabla
    T_{n}(\ueps)| dx.
  \end{equation}

  In view of \eqref{neu02} we have for any $n>0$ and for any
  $\varepsilon>0$ $|\med(T_{n}(\ueps))| \leq M$.
  It follows that by writing $T_{n}(\ueps)=T_{n}(\ueps)-\med(T_{n}(\ueps)) +
  \med(T_{n}(\ueps))$ we obtain
  \begin{equation}\label{neu05}
    \begin{split} 
      \frac{1}{n} \int_{\Omega} c(x) &(1+|T_{n}(\ueps)|^{p-1})  |\nabla
      T_{n}(\ueps)| dx  \\
      & \leq
      \frac{C}{n} \int_{\Omega} c(x)
      (1+|T_{n}(\ueps)-\med(T_{n}(\ueps))|^{p-1}) 
      |\nabla   T_{n}(\ueps)| dx 
    \end{split}
  \end{equation}
  where $C>0$ is a constant independent of $\varepsilon$ and $n$.
  Poincar\'e-Wirtinger inequality  \eqref{poincare}, similar
  arguments to the ones developed in Step 4 and \eqref{neu02} then allow
  conclude that \eqref{neu03} holds.
  \par
  As far as Step 5 is concerned, it is sufficient to remark that the
  Lebesgue Theorem yields that
  \begin{gather*}
    \lim_{n\rightarrow +\infty} \lim_{\varepsilon\rightarrow0 }\varepsilon 
    \int_{\Omega} h_{n}(\ueps) |\ueps|^{p-2} \ueps (T_{k}
    (\ueps)-T_{k}(u))  dx =0 
    \\
    \lim_{n\rightarrow +\infty} \lim_{\varepsilon\rightarrow0 }
    \int_{\Omega} h_{n}(\ueps) \lambda(x,T_{1/\varepsilon}(\ueps)) (T_{k}
    (\ueps)-T_{k}(u)) dx =0.
  \end{gather*}
  Since Step 6 remains unchanged, in Step 7 we pass
  to the limit as $\varepsilon$ goes to zero in 
  \begin{multline*}
    \varepsilon 
    \int_{\Omega} h(\ueps) |\ueps|^{p-2} \ueps \varphi dx +
    \int_{\Omega} h(\ueps) \lambda(x,T_{1/\varepsilon}(\ueps)) \varphi dx 
    \\
    + \int_{\Om} h(u_\vp) \aop (x,u_\vp, \D u_\vp) \D\varphi
    dx + \int_{\Om} h'(u_\vp) \aop (x,u_\vp, \D u_\vp) \D u_\vp   \varphi
    dx  \\ 
    +   \int_{\Om} h(u_\vp)\Phi (x, u_\vp)  \D\varphi dx + \int_{\Om} h'(u_\vp)\Phi (x, u_\vp)  \D u_\vp \varphi dx \\
    = {} \int_\Om T_{1/\varepsilon}(f)\varphi h(u_\vp) dx
  \end{multline*}
  where $h$ is a Lispchitz continuous function with compact support and
  where $\varphi$ lies in $W^{1,p}(\Omega)\cap L^{\infty}(\Omega)$. Since the
  Lebesgue Theorem gives that
  \begin{gather*}
    \lim_{\varepsilon\rightarrow 0}  \varepsilon 
    \int_{\Omega} h(\ueps) |\ueps|^{p-2} \ueps \varphi dx =0
    \\
    \lim_{\varepsilon\rightarrow 0}  \int_{\Omega} h(\ueps)
    \lambda(x,T_{1/\varepsilon}(\ueps)) \varphi dx = \int_{\Omega}
    h(u) \lambda(x,u) \varphi dx
  \end{gather*}
  the attentive reader may convince by himself that we obtain the
  existence of a renormalized solution to equation \eqref{newpb}.
\end{proof}

\section*{Acknowledgement}
This work was done during the visits made by the first and the third authors to
Laboratoire de Math\'ematiques ``Rapha\"el Salem'' de l'Universit\'e
de Rouen and by the second author to 
 Dipartimento di Matematica e
Applicazioni ``R. Caccioppoli'' dell' Universit\`a degli Studi di
Napoli ``Federico II''. 
Hospitality and support of all these
institutions are gratefully acknowledged.

\bibliography{bgm1}

\def\cprime{$'$}
\begin{thebibliography}{10}

\bibitem{ACMM}
A.~Alvino, A.~Cianchi, V.~G. Maz'ya, and A.~Mercaldo.
\newblock Well-posed elliptic {N}eumann problems involving irregular data and
  domains.
\newblock {\em Ann. Inst. H. Poincar\'e Anal. Non Lin\'eaire},
  27(4):1017--1054, 2010.

\bibitem{AM1}
A.~Alvino and A.~Mercaldo.
\newblock Nonlinear elliptic problems with {$L^1$} data: an approach via
  symmetrization methods.
\newblock {\em Mediterr. J. Math.}, 5(2):173--185, 2008.

\bibitem{AMST97}
F.~Andreu, J.~M. Maz{\'o}n, S.~Segura~de Le{\'o}n, and J.~Toledo.
\newblock Quasi-linear elliptic and parabolic equations in {$L^1$} with
  nonlinear boundary conditions.
\newblock {\em Adv. Math. Sci. Appl.}, 7(1):183--213, 1997.

\bibitem{BeGu}
M.~Ben Cheikh~Ali and O.~Guib{\'e}.
\newblock Nonlinear and non-coercive elliptic problems with integrable data.
\newblock {\em Adv. Math. Sci. Appl.}, 16(1):275--297, 2006.

\bibitem{BBGGPV}
P.~B{\'e}nilan, L.~Boccardo, T.~Gallou{\"e}t, R.~Gariepy, M.~Pierre, and J.~L.
  V{\'a}zquez.
\newblock An {$L^1$}-theory of existence and uniqueness of solutions of
  nonlinear elliptic equations.
\newblock {\em Ann. Scuola Norm. Sup. Pisa Cl. Sci. (4)}, 22(2):241--273, 1995.

\bibitem{BMMP1}
M.~F. Betta, A.~Mercaldo, F.~Murat, and M.~M. Porzio.
\newblock Existence and uniqueness results for nonlinear elliptic problems with
  a lower order term and measure datum.
\newblock {\em C. R. Math. Acad. Sci. Paris}, 334(9):757--762, 2002.

\bibitem{BMMP}
M.~F. Betta, A.~Mercaldo, F.~Murat, and M.~M. Porzio.
\newblock Existence of renormalized solutions to nonlinear elliptic equations
  with a lower-order term and right-hand side a measure.
\newblock {\em J. Math. Pures Appl. (9)}, 82(1):90--124, 2003.
\newblock Corrected reprint of J. Math. Pures Appl. (9) {{\bf{8}}1} (2002), no.
  6, 533--566 [ MR1912411 (2003e:35075)].

\bibitem{BG1}
L.~Boccardo and T.~Gallou{\"e}t.
\newblock Nonlinear elliptic and parabolic equations involving measure data.
\newblock {\em J. Funct. Anal.}, 87(1):149--169, 1989.

\bibitem{BG2}
L.~Boccardo and T.~Gallou{\"e}t.
\newblock Nonlinear elliptic equations with right-hand side measures.
\newblock {\em Comm. Partial Differential Equations}, 17(3-4):641--655, 1992.

\bibitem{BOP}
L.~Boccardo, L.~Orsina, and A.~Porretta.
\newblock Some noncoercive parabolic equations with lower order terms in
  divergence form.
\newblock {\em J. Evol. Equ.}, 3(3):407--418, 2003.
\newblock Dedicated to Philippe B{\'e}nilan.

\bibitem{Ciabr}
J.~Chabrowski.
\newblock On the {N}eumann problem with {$L^1$} data.
\newblock {\em Colloq. Math.}, 107(2):301--316, 2007.

\bibitem{DMOP}
G.~Dal~Maso, F.~Murat, L.~Orsina, and A.~Prignet.
\newblock Renormalized solutions of elliptic equations with general measure
  data.
\newblock {\em Ann. Scuola Norm. Sup. Pisa Cl. Sci. (4)}, 28(4):741--808, 1999.

\bibitem{Aglio}
A.~Dall'Aglio.
\newblock Approximated solutions of equations with {$L^1$} data. {A}pplication
  to the {$H$}-convergence of quasi-linear parabolic equations.
\newblock {\em Ann. Mat. Pura Appl. (4)}, 170:207--240, 1996.

\bibitem{Rako}
A.~Decarreau, J.~Liang, and J.-M. Rakotoson.
\newblock Trace imbeddings for {$T$}-sets and application to
  {N}eumann-{D}irichlet problems with measures included in the boundary data.
\newblock {\em Ann. Fac. Sci. Toulouse Math. (6)}, 5(3):443--470, 1996.

\bibitem{Droniou00}
J.~Droniou.
\newblock Solving convection-diffusion equations with mixed, {N}eumann and
  {F}ourier boundary conditions and measures as data, by a duality method.
\newblock {\em Adv. Differential Equations}, 5(10-12):1341--1396, 2000.

\bibitem{DV}
J.~Droniou and J.-L. V{\'a}zquez.
\newblock Noncoercive convection-diffusion elliptic problems with {N}eumann
  boundary conditions.
\newblock {\em Calc. Var. Partial Differential Equations}, 34(4):413--434,
  2009.

\bibitem{FM2}
V.~Ferone and A.~Mercaldo.
\newblock A second order derivation formula for functions defined by integrals.
\newblock {\em C. R. Acad. Sci. Paris S\'er. I Math.}, 326(5):549--554, 1998.

\bibitem{FM}
V.~Ferone and A.~Mercaldo.
\newblock Neumann problems and {S}teiner symmetrization.
\newblock {\em Comm. Partial Differential Equations}, 30(10-12):1537--1553,
  2005.

\bibitem{GM2}
O.~Guib{\'e} and A.~Mercaldo.
\newblock Existence and stability results for renormalized solutions to
  noncoercive nonlinear elliptic equations with measure data.
\newblock {\em Potential Anal.}, 25(3):223--258, 2006.

\bibitem{GM1}
O.~Guib{\'e} and A.~Mercaldo.
\newblock Existence of renormalized solutions to nonlinear elliptic equations
  with two lower order terms and measure data.
\newblock {\em Trans. Amer. Math. Soc.}, 360(2):643--669 (electronic), 2008.

\bibitem{LL}
J.~Leray and J.-L. Lions.
\newblock Quelques r\'esulatats de {V}i\v sik sur les probl\`emes elliptiques
  non lin\'eaires par les m\'ethodes de {M}inty-{B}rowder.
\newblock {\em Bull. Soc. Math. France}, 93:97--107, 1965.

\bibitem{Lions}
J.-L. Lions.
\newblock {\em Quelques m\'ethodes de r\'esolution des probl\`emes aux limites
  non lin\'eaires}.
\newblock Dunod, 1969.

\bibitem{LM}
P.L. Lions and F.~Murat.
\newblock Sur les solutions renormalis\'ees d'{\'e}quations elliptiques non
  lin\'eaires.
\newblock In {\em manuscript}.

\bibitem{murat94}
F.~Murat.
\newblock Equations elliptiques non lin\'eaires avec second membre ${L}^1$ ou
  mesure.
\newblock In {\em Compte Rendus du 26\`eme Congr\`es d'Analyse Num\'erique},
  les Karellis, 1994.

\bibitem{Prignet97}
A.~Prignet.
\newblock Conditions aux limites non homog\`enes pour des probl\`emes
  elliptiques avec second membre mesure.
\newblock {\em Ann. Fac. Sci. Toulouse Math. (6)}, 6(2):297--318, 1997.

\bibitem{St}
G.~Stampacchia.
\newblock Le probl\`eme de {D}irichlet pour les \'equations elliptiques du
  second ordre \`a coefficients discontinus.
\newblock {\em Ann. Inst. Fourier (Grenoble)}, 15(fasc. 1):189--258, 1965.

\bibitem{Z}
W.~P. Ziemer.
\newblock {\em Weakly differentiable functions}, volume 120 of {\em Graduate
  Texts in Mathematics}.
\newblock Springer-Verlag, New York, 1989.
\newblock Sobolev spaces and functions of bounded variation.

\end{thebibliography}
\bibliographystyle{plain}

\end{document}